\theoremstyle{plain}
\newtheorem{theorem}{Theorem}[section]
\newtheorem{proposition}[theorem]{Proposition}
\newtheorem{lemma}[theorem]{Lemma}
\theoremstyle{definition}
\newtheorem*{problem}{Problem}
\numberwithin{equation}{section}
\title[Hilbert's inequality]{On the Montgomery--Vaughan weighted generalization of Hilbert's inequality}
\author{Wijit Yangjit}
\date{\today}
\subjclass[2010]{Primary 15A42, 26D15, 26D05}
\address{Department of Mathematics, University of Michigan, 530 Church Street, Ann Arbor MI 48109--1043, USA}
\email{yangjit@umich.edu}
\begin{document}

\maketitle

\begin{abstract}
This paper concerns the problem of determining the optimal constant in the Montgomery--Vaughan weighted generalization of Hilbert's inequality. We consider an approach pursued by previous authors via a parametric family of inequalities. We obtain upper and lower bounds for the constants in inequalities in this family. A lower bound indicates that the method in its current form cannot achieve any value below $3.19497$, so cannot achieve the conjectured constant $\pi$. The problem of determining the optimal constant remains open.
\end{abstract}

\section{Introduction}

In this paper, we study a parametric family of inequalities, given in \eqref{general-alpha} below, that can yield an upper bound on the optimal constant in the Montgomery--Vaughan weighted generalization of Hilbert's inequality \eqref{MV2}. This inequality is important in the theory of the large sieve; see \cite{MV1973} and \cite{Mon1978}.

%
%

\subsection{History of the problem}

Let $N$ denote a positive integer, and let $z_1,\dots,z_N$ denote complex numbers. Hilbert's inequality states that
\begin{equation}\label{Hilbert}
\left\vert\sum_{m=1}^N\sum_{\substack{n=1\\n\neq m}}^N\frac{z_m\overline{z_n}}{m-n}\right\vert\le C_0\sum_{n=1}^N\left\vert z_n\right\vert^2,
\end{equation}
where $C_0$ is the absolute constant $2\pi$. Hilbert's proof was published by Weyl \cite[\S~15]{Wey1908}. In 1911, Schur \cite{Sch1911} obtained \eqref{Hilbert} with $C_0=\pi$ and demonstrated that this absolute constant is best possible. Hardy, Littlewood, and P\'{o}lya \cite[pp.~235--236]{HLP1952} gave an account of Hilbert's proof. Schur's proof is also reproduced in \cite[Theorem~294]{HLP1952}.

In 1974, Montgomery and Vaughan \cite{MV1974} established a generalization: If $\delta>0$ and $\left(\lambda_k\right)_{k=-\infty}^\infty$ is a sequence of real numbers such that $\lambda_{k+1}-\lambda_k\ge\delta$ for all $k$, then
\begin{equation}\label{MV1}
\left\vert\sum_{m=1}^N\sum_{\substack{n=1\\n\neq m}}^N\frac{z_m\overline{z_n}}{\lambda_m-\lambda_n}\right\vert\le\frac{\pi}{\delta}\sum_{n=1}^N\left\vert z_n\right\vert^2.
\end{equation}
Schur's bound is included in \eqref{MV1} as the case $\lambda_{k+1}-\lambda_k=\delta$. In the same paper, Montgomery and Vaughan also established a weighted form:
\begin{equation}\label{MV2}
\left\vert\sum_{m=1}^N\sum_{\substack{n=1\\n\neq m}}^N\frac{z_m\overline{z_n}}{\lambda_m-\lambda_n}\right\vert\le C_1\sum_{n=1}^N\frac{\left\vert z_n\right\vert^2}{\delta_n},
\end{equation}
where $\lambda_{k+1}>\lambda_k$ for all $k$ and $\delta_k\coloneqq\min\left\{\lambda_k-\lambda_{k-1},\lambda_{k+1}-\lambda_k\right\}$ and $C_1$ is the absolute constant $\frac{3\pi}{2}$. Denote by $\overline{C}_1$ the minimum of all absolute constants $C_1$ for which \eqref{MV2} holds. Montgomery and Vaughan \cite{MV1974} have raised the

\begin{problem}
Determine $\overline{C}_1$.
\end{problem}

By setting $\lambda_k=k$ in \eqref{MV2} and comparing with Schur's result, we see that
\begin{equation}\label{lowerC1}
\overline{C}_1\ge\pi.
\end{equation}
If $\overline{C}_1=\pi$, then \eqref{MV2} would contain \eqref{MV1}, and it is widely believed to be the case. In 1984, Preissmann \cite{Pre1984} proved that
\begin{equation}\label{upperC1}
\overline{C}_1\le\pi\sqrt{1+\frac{2}{3}\sqrt{\frac{6}{5}}}<\frac{4\pi}{3}.
\end{equation}
Preissmann's proof is based on that of Montgomery and Vaughan. Selberg (unpublished) said that he had shown that $\overline{C}_1\le3.2$ (which is $<\frac{54\pi}{53}$), but it seems that no trace remains of his argument; cf. \cite[p.~557]{Mon1978} and \cite[p.~145]{Mon1994}.

In 1981, Graham and Vaaler \cite{GV1981} constructed extreme majorants and minorants of the functions
$$
E(\beta,x)\coloneqq
\begin{cases}
e^{-\beta x}&\text{if }x\ge0,\\
0&\text{if }x<0,
\end{cases}
$$
where $\beta$ is an arbitrary positive real number, and used them to prove that
\begin{equation}\label{GrahamVaaler}
\frac{1}{\delta\left(e^{\beta/\delta}-1\right)}\sum_{n=1}^N\left\vert z_n\right\vert^2\le\sum_{m=1}^N\sum_{n=1}^N\frac{z_m\overline{z_n}}{\beta+2\pi i\left(\lambda_m-\lambda_n\right)}\le\frac{e^{\beta/\delta}}{\delta\left(e^{\beta/\delta}-1\right)}\sum_{n=1}^N\left\vert z_n\right\vert^2.
\end{equation}
The inequality \eqref{GrahamVaaler} includes \eqref{MV1} as the limiting case $\beta\rightarrow0^+$. In 1999, Montgomery and Vaaler \cite{MV1999} established a generalization of \eqref{MV2}:
\begin{equation}\label{MonVaa}
\left\vert\sum_{m=1}^N\sum_{\substack{n=1\\n\neq m}}^N\frac{z_m\overline{z_n}}{\beta_m+\beta_n+i\left(\lambda_m-\lambda_n\right)}\right\vert\le C_2\sum_{n=1}^N\frac{\left\vert z_n\right\vert^2}{\delta_n},
\end{equation}
where $\beta_1,\dots,\beta_N$ are nonnegative real numbers and $C_2$ is the absolute constant $84$, which is not optimal. Their proof involves the theory of $H^2$ functions in a half-plane and a maximal theorem of Hardy and Littlewood.

In 2005, Li \cite{Li2005} posed a question about the finite Hilbert transformation associated with a polynomial and proved that if the question always has an affirmative answer then $\overline{C}_1=\pi$.

%
%

\subsection{Main results}\label{subsectionC3}

We study the following parametric family of inequalities. For $0\le\alpha\le2$, let $\overline{C}(\alpha)$ be the minimum of all constants $C(\alpha)$ for which the inequality
\begin{equation}\label{general-alpha}
\sum_{m=1}^N\sum_{\substack{n=1\\n\neq m}}^N\frac{\delta_m^{2-\alpha}\delta_n^\alpha t_mt_n}{\left(\lambda_m-\lambda_n\right)^2}\le C(\alpha)\sum_{n=1}^Nt_n^2
\end{equation}
holds for all choices of a positive integer $N$, a strictly increasing sequence $\left(\lambda_k\right)_{k=-\infty}^\infty$ of real numbers,
$$
\delta_k\coloneqq\min\left\{\lambda_k-\lambda_{k-1},\lambda_{k+1}-\lambda_k\right\},
$$
and nonnegative real numbers $t_1,\dots,t_N$. Let $\overline{C}(\alpha)=\infty$ if there is no such real number $C(\alpha)$.

The value $\overline{C}\left(\frac{1}{2}\right)$ is relevant to the generalized Hilbert inequality \eqref{MV2}. In Section~\ref{C(1/2)theorems}, we shall prove the following inequality between $\overline{C}_1$ and $\overline{C}\left(\frac{1}{2}\right)$.
\begin{theorem}\label{C1C1/2inequality}
We have $\overline{C}_1\le\sqrt{\frac{\pi^2}{3}+2\overline{C}\left(\frac{1}{2}\right)}$.
\end{theorem}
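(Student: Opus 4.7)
The plan is to apply Cauchy--Schwarz to the double sum in \eqref{MV2} and then bound the two pieces that arise from expanding the resulting square. Writing $S$ for the double sum on the left-hand side of \eqref{MV2} and pairing $\overline{z_n}/\sqrt{\delta_n}$ with $\sqrt{\delta_n}\sum_{m\neq n}z_m/(\lambda_m-\lambda_n)$, Cauchy--Schwarz yields
\[
|S|^{2}\le\left(\sum_{n}\frac{|z_n|^{2}}{\delta_n}\right)\left(\sum_{n}\delta_n\left|\sum_{m\neq n}\frac{z_m}{\lambda_m-\lambda_n}\right|^{2}\right).
\]
Expanding the inner square and switching the order of summation decomposes the second factor into a diagonal piece $\sum_m|z_m|^{2}\sum_{n\neq m}\delta_n/(\lambda_m-\lambda_n)^{2}$ and an off-diagonal piece $\sum_{m_1\neq m_2}z_{m_1}\overline{z_{m_2}}\,A(m_1,m_2)$, where
\[
A(m_1,m_2)\coloneqq\sum_{n\neq m_1,m_2}\frac{\delta_n}{(\lambda_{m_1}-\lambda_n)(\lambda_{m_2}-\lambda_n)}.
\]
The theorem will follow once I bound the first piece by $(\pi^{2}/3)\sum_n|z_n|^{2}/\delta_n$ and the second by $2\overline{C}(\tfrac12)\sum_n|z_n|^{2}/\delta_n$, since then taking the square root gives $|S|\le\sqrt{\pi^{2}/3+2\overline{C}(\tfrac12)}\sum_n|z_n|^{2}/\delta_n$.

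For the diagonal piece I would establish the pointwise bound
\[
\sum_{n\neq m}\frac{\delta_n}{(\lambda_m-\lambda_n)^{2}}\le\frac{\pi^{2}}{3\delta_m},
\]
which reduces to the identity $\sum_{k\neq 0}k^{-2}=\pi^{2}/3$ and is sharp when $\lambda_n=n$. For the off-diagonal piece the crucial lemma is the symmetric kernel estimate
\[
|A(m_1,m_2)|\le\frac{\delta_{m_1}+\delta_{m_2}}{(\lambda_{m_1}-\lambda_{m_2})^{2}},
\]
again attained with equality at $\lambda_n=n$ (both sides then equal $2/(m_1-m_2)^{2}$). Since $A(m_1,m_2)$ is real and symmetric in $m_1,m_2$, the off-diagonal piece equals $2\operatorname{Re}\sum_{m_1<m_2}z_{m_1}\overline{z_{m_2}}A(m_1,m_2)$, so its modulus is at most $\sum_{m_1\neq m_2}|z_{m_1}||z_{m_2}|\,|A(m_1,m_2)|$. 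Substituting the kernel bound and setting $t_m\coloneqq|z_m|/\sqrt{\delta_m}$ turns this into
\[
\sum_{m_1\neq m_2}\frac{\bigl(\delta_{m_1}^{3/2}\delta_{m_2}^{1/2}+\delta_{m_1}^{1/2}\delta_{m_2}^{3/2}\bigr)t_{m_1}t_{m_2}}{(\lambda_{m_1}-\lambda_{m_2})^{2}}=2\sum_{m_1\neq m_2}\frac{\delta_{m_1}^{3/2}\delta_{m_2}^{1/2}t_{m_1}t_{m_2}}{(\lambda_{m_1}-\lambda_{m_2})^{2}}
\]
by the $m_1\leftrightarrow m_2$ symmetry, and the definition of $\overline{C}(\tfrac12)$ in \eqref{general-alpha} bounds this by $2\overline{C}(\tfrac12)\sum_nt_n^{2}=2\overline{C}(\tfrac12)\sum_n|z_n|^{2}/\delta_n$, as required.

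The hard part will be the off-diagonal kernel estimate. Unlike the diagonal sum, whose summands are all positive, $A(m_1,m_2)$ involves substantial cancellation: its terms are positive when $\lambda_n$ lies outside $[\lambda_{m_1},\lambda_{m_2}]$ and negative when $\lambda_n$ lies strictly between them. Any crude bound that discards this cancellation---replacing summands by their absolute values, or applying Cauchy--Schwarz termwise---only yields an estimate of order $1/\sqrt{\delta_{m_1}\delta_{m_2}}$, with no decay in $|\lambda_{m_1}-\lambda_{m_2}|$, which is far too weak. I would attack the estimate via the partial-fraction identity
\[
\frac{1}{(\lambda_{m_1}-\lambda_n)(\lambda_{m_2}-\lambda_n)}=\frac{1}{\lambda_{m_1}-\lambda_{m_2}}\left(\frac{1}{\lambda_{m_2}-\lambda_n}-\frac{1}{\lambda_{m_1}-\lambda_n}\right),
\]
treating the resulting only conditionally convergent sums by a Voronoi-interval integral comparison based on $\delta_n\le\min(\lambda_n-\lambda_{n-1},\lambda_{n+1}-\lambda_n)$. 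The fact that both lemmas are sharp at $\lambda_n=n$ strongly suggests that this equi-spaced configuration is the extremal case for each.
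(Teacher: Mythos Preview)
Your diagonal estimate is exactly Lemma~\ref{lemma:Preissmann}, so that part is fine; the gap is in the off-diagonal kernel estimate, which is false. Take $N=3$ with $\lambda_1=0$, $\lambda_2=1$, $\lambda_3=2$, and choose the ambient bi-infinite sequence so that $\lambda_0=-\epsilon$ and $\lambda_4=2+\epsilon$. Then $\delta_1=\delta_3=\epsilon$ while $\delta_2=1$, and the single surviving term gives
\[
A(1,3)=\frac{\delta_2}{(\lambda_1-\lambda_2)(\lambda_3-\lambda_2)}=-1,
\]
whereas your claimed bound is $(\delta_1+\delta_3)/(\lambda_1-\lambda_3)^2=\epsilon/2$. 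The failure is structural: $A(m_1,m_2)$ does not depend on $\delta_{m_1}$ or $\delta_{m_2}$ at all---those two gaps can be shrunk to zero by moving $\lambda_0$ and $\lambda_{N+1}$ without touching any term of $A$---so no inequality with $\delta_{m_1}+\delta_{m_2}$ alone on the right can hold uniformly. Your Cauchy--Schwarz expansion simply does not generate the $\delta_m^{3/2}\delta_n^{1/2}$ weights that the definition of $\overline{C}(\tfrac12)$ requires, and no partial-fraction or integral-comparison argument can rescue an inequality whose two sides are controlled by disjoint data.

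The paper sidesteps this by restricting to an eigenvector $u$ of the skew-Hermitian matrix $H$ with entries $\sqrt{\delta_m\delta_n}/(\lambda_m-\lambda_n)$ and eigenvalue $i\mu$, and invoking the Selberg-type identity of Lemma~\ref{identity:eigenv}:
\[
\mu^2|u_m|^2=\sum_{n\neq m}\frac{\delta_m\delta_n|u_n|^2}{(\lambda_m-\lambda_n)^2}+2\sum_{n\neq m}\frac{\delta_m^{3/2}\delta_n^{1/2}\,\Re(\overline{u_m}u_n)}{(\lambda_m-\lambda_n)^2}.
\]
This identity, whose proof uses the eigenvalue relation a second time together with a partial-fraction step, produces the $\delta_m^{3/2}\delta_n^{1/2}$ weights algebraically, with no kernel estimate needed. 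Summing over $m$, bounding $\Re(\overline{u_m}u_n)\le|u_m||u_n|$, applying Lemma~\ref{lemma:Preissmann} to the first sum and the definition of $\overline{C}(\tfrac12)$ to the second yields $\mu^2\le\pi^2/3+2\overline{C}(\tfrac12)$; since the spectral and numerical radii of the normal matrix $H$ coincide, this bounds $\overline{C}_1$.
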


The previous approaches to get an upper bound for $\overline{C}_1$ in \cite{MV1974}, \cite{Pre1984}, and \cite{Sha1984} rely on an upper bound for $\overline{C}\left(\frac{1}{2}\right)$ and Theorem \ref{C1C1/2inequality}. Montgomery and Vaughan \cite{MV1974} first showed that $\overline{C}\left(\frac{1}{2}\right)$ is finite. Specifically, they proved $\overline{C}\left(\frac{1}{2}\right)\le\frac{17}{2}$. The same bound has been used in \cite{MV1999} to prove \eqref{MonVaa}, but the best known upper bound for $\overline{C}\left(\frac{1}{2}\right)$ is due to Preissmann \cite{Pre1984}.

\begin{theorem}[Preissmann]\label{Preissmann'sTheorem}
We have $\overline{C}\left(\frac{1}{2}\right)\le\frac{\pi^2}{3}+\frac{\pi^2}{3}\sqrt{\frac{6}{5}}$.
\end{theorem}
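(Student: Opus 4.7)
The plan is to follow the strategy of Montgomery--Vaughan, refined by Preissmann: reduce the bilinear form on the left of \eqref{general-alpha} with $\alpha=\frac{1}{2}$ to one-dimensional sums over the gaps by a carefully weighted Cauchy--Schwarz inequality. Since the sum runs over ordered pairs $m\neq n$, the asymmetric kernel $\delta_m^{3/2}\delta_n^{1/2}/(\lambda_m-\lambda_n)^2$ can be replaced by its symmetrization $\sqrt{\delta_m\delta_n}\,(\delta_m+\delta_n)/\bigl(2(\lambda_m-\lambda_n)^2\bigr)$. I would then apply Cauchy--Schwarz with a free parameter $r>0$ and auxiliary weights chosen to exploit this structure, reducing the problem to bounding expressions of the form $\sum_m t_m^2\, W_m(r)$, where each $W_m(r)$ is a gap sum over $n\neq m$.

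The key geometric input is a Riemann-sum estimate of the shape
\begin{equation*}
\sum_{n\neq m}\frac{\delta_n}{(\lambda_m-\lambda_n)^2}\le\frac{\pi^2}{3\,\delta_m},
\end{equation*}
which I would derive by comparing the sum to the integral $\int dy/(\lambda_m-y)^2$, using the inequality $\delta_n\le(\lambda_{n+1}-\lambda_{n-1})/2$ together with the fact that the $k$-th point on either side of $\lambda_m$ lies at distance at least $k\delta_m$; the constant $\pi^2/3$ arises from summing $\zeta(2)=\pi^2/6$ on each side of $m$. A direct application of this bound, combined with elementary rearrangement of summation, should already yield the first summand $\pi^2/3$ in the claimed constant.

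To recover the additional factor $\sqrt{6/5}$, I would apply a second, refined Cauchy--Schwarz to one of the remaining inner sums, producing a product of two gap sums: one controlled by $\zeta(2)$, and the other by a $\zeta(4)$-type estimate such as $\sum_{n\neq m}\delta_n^3/(\lambda_m-\lambda_n)^4\le c/\delta_m$, proved by the same integral comparison. Since $3\,\zeta(4)/\zeta(2)^2=6/5$, this is a natural combinatorial source of the factor $\sqrt{6/5}$; optimizing the parameter $r$ from the first step should then combine the two contributions into exactly $(\pi^2/3)\bigl(1+\sqrt{6/5}\bigr)\sum_n t_n^2$.

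The main obstacle I anticipate is the bookkeeping required to choose the weights so that the two contributions combine into precisely the claimed constant rather than something strictly larger. The underlying geometric estimates are close to standard Riemann-sum comparisons, but squeezing out the sharp factor $\sqrt{6/5}$ demands that both the auxiliary weights and the parameter $r$ be optimized simultaneously; this is exactly the point where Preissmann's refinement over the original Montgomery--Vaughan bound $\overline{C}\left(\frac{1}{2}\right)\le\frac{17}{2}$ enters.
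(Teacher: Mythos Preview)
Your sketch correctly identifies several ingredients --- the symmetrization of the kernel, the spacing estimate $\sum_{n\neq m}\delta_n/(\lambda_m-\lambda_n)^2\le \pi^2/(3\delta_m)$, and the numerical relation $3\zeta(4)/\zeta(2)^2=6/5$ --- but it misses the structural mechanism that actually produces the constant $(\pi^2/3)(1+\sqrt{6/5})$. The argument does \emph{not} bound $U=\sum_{m\neq n}\delta_m^{3/2}\delta_n^{1/2}t_mt_n/(\lambda_m-\lambda_n)^2$ directly by a parametric expression $\sum_m t_m^2\, W_m(r)$ and then optimize over $r$. Instead one applies Cauchy's inequality to $U$ written as $\sum_n t_n\cdot(\text{inner sum over }m)$, obtaining $U^2\le V(S+T)$ with $V=\sum_n t_n^2$, where $S$ is a diagonal double sum and $T$ is an off-diagonal \emph{triple} sum over $\ell,m,n$ pairwise distinct.

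The diagonal term $S$ is handled by the $\sigma=4$ case of the spacing lemma, giving $S\le 2\zeta(4)V=\frac{\pi^4}{45}V$; note the relevant sum there is $\sum_{n\neq m}\delta_n/(\lambda_m-\lambda_n)^4$, with $\delta_n$ to the first power, not your $\delta_n^3$. The decisive input --- absent from your plan --- is Preissmann's three-index estimate
\[
\sum_{\substack{k\neq\ell\\k\neq m}}\frac{\delta_k}{(\lambda_k-\lambda_\ell)^2(\lambda_k-\lambda_m)^2}\le\frac{\pi^2(\delta_\ell+\delta_m)}{3\,\delta_\ell\delta_m(\lambda_\ell-\lambda_m)^2},
\]
which, after multiplying by $\delta_\ell^{3/2}\delta_m^{3/2}t_\ell t_m$ and summing over $\ell\neq m$, returns the triple sum $T$ to a constant multiple of $U$ itself: $T\le\frac{2\pi^2}{3}U$. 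This bootstrap yields the genuine quadratic inequality $U^2\le\frac{\pi^4}{45}V^2+\frac{2\pi^2}{3}UV$, and it is solving that inequality --- not optimizing an auxiliary parameter --- which gives $(\pi^2/3)(1+\sqrt{6/5})$. A direct reduction to one-index gap sums $W_m(r)$ of the kind you describe has no mechanism for feeding $U$ back into its own bound, and I do not see how it reaches the Preissmann constant rather than something weaker.
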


By means of Theorem \ref{C1C1/2inequality}, Theorem \ref{Preissmann'sTheorem} implies \eqref{upperC1}. Another immediate consequence of Theorem \ref{C1C1/2inequality} is that \eqref{lowerC1} implies $\overline{C}\left(\frac{1}{2}\right)\ge\frac{\pi^2}{3}$. (This lower bound has been pointed out in \cite[p.~36]{MV1999}.) Moreover, the conjecture that $\overline{C}_1=\pi$ would follow if $\overline{C}\left(\frac{1}{2}\right)=\frac{\pi^2}{3}$.

In Section \ref{C(alpha)theorems}, we shall prove the following properties of $\overline{C}(\alpha)$.

\begin{theorem}\label{boundedness}
\emph{(1)} For real numbers $0\le\alpha\le2$, we have $\overline{C}(\alpha)=\overline{C}(2-\alpha)>0$.\\

\emph{(2)} For real numbers $0\le\alpha_1<\alpha_2\le2$ and $0<\theta<1$, we have
$$
\overline{C}\left(\theta\alpha_1+(1-\theta)\alpha_2\right)\le\overline{C}\left(\alpha_1\right)^\theta\overline{C}\left(\alpha_2\right)^{1-\theta}.
$$

\emph{(3)} For real numbers $0\le\alpha_1<\alpha_2\le1$, we have $\overline{C}\left(\alpha_1\right)\ge\overline{C}\left(\alpha_2\right)$. Therefore the minimum of $\overline{C}(\alpha)$ for $0\le\alpha\le2$ is attained at $\alpha=1$.\\

\emph{(4)} For real numbers $0\le\alpha<\frac{1}{2}$, we have $\overline{C}(\alpha)=\infty$.
\end{theorem}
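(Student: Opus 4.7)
The plan is to handle the four parts in order, using parts (1) and (2) as the algebraic foundation for (3) and reserving a custom extremal construction for (4).

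For part (1) I would derive the symmetry $\overline{C}(\alpha) = \overline{C}(2-\alpha)$ by swapping the roles of $m$ and $n$ in the summand of \eqref{general-alpha}: the denominator $(\lambda_m-\lambda_n)^2$ and the factor $t_m t_n$ are symmetric, while $\delta_m^{2-\alpha}\delta_n^\alpha$ turns into $\delta_m^\alpha \delta_n^{2-\alpha}$. Positivity follows from the single test configuration $\lambda_k = k$, $t_1 = t_2 = 1$ (the remaining $t_j$ zero), which makes every $\delta_k = 1$ and gives a left-hand side equal to $2$, forcing $\overline{C}(\alpha) \ge 1$.

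For part (2) I would factor each summand at the interpolated exponent $\alpha_\theta = \theta\alpha_1 + (1-\theta)\alpha_2$ as
$$
\frac{\delta_m^{2-\alpha_\theta}\delta_n^{\alpha_\theta} t_m t_n}{(\lambda_m-\lambda_n)^2}
= \left[\frac{\delta_m^{2-\alpha_1}\delta_n^{\alpha_1} t_m t_n}{(\lambda_m-\lambda_n)^2}\right]^{\theta}
\left[\frac{\delta_m^{2-\alpha_2}\delta_n^{\alpha_2} t_m t_n}{(\lambda_m-\lambda_n)^2}\right]^{1-\theta}
$$
and apply H\"older's inequality to the double sum with exponents $1/\theta$ and $1/(1-\theta)$, bounding each factor via the defining inequality for $\overline{C}(\alpha_i)$. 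Part (3) then follows by a convexity-and-symmetry argument: given $0 \le \alpha_1 < \alpha_2 \le 1$, write $\alpha_2 = \theta\alpha_1 + (1-\theta)(2-\alpha_1)$ with $\theta = (2-\alpha_1-\alpha_2)/(2(1-\alpha_1)) \in (0,1)$ (the case $\alpha_1 = 1$ being vacuous) and apply (2) to the pair $(\alpha_1, 2-\alpha_1)$, using $\overline{C}(2-\alpha_1) = \overline{C}(\alpha_1)$ from (1) to collapse the bound to $\overline{C}(\alpha_1)$.

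Part (4) is where I expect the real work. Fix $\alpha \in [0, 1/2)$ and pick an auxiliary parameter $\gamma$ with $\alpha < \gamma < 1/2$. For small $\epsilon > 0$ set $M = \lfloor \epsilon^{-2\gamma} \rfloor$ and consider the configuration $\lambda_k = k$ for $k \le 0$, $\lambda_j = 1 + (j-1)\epsilon$ for $1 \le j \le M$, and $\lambda_k = \lambda_M + (k-M)$ for $k > M$, so that $\delta_0 = 1$ and $\delta_j = \epsilon$ for $1 \le j \le M$. Take $t_0 = 1$, $t_j = \epsilon^\gamma$ for $1 \le j \le M$, and $t_k = 0$ otherwise. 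Then the right-hand side of \eqref{general-alpha} equals $1 + M\epsilon^{2\gamma} = O(1)$, and the within-cluster contribution is bounded by $\epsilon^{2\gamma}\sum_{j \ne k; 1 \le j,k \le M} 1/(j-k)^2 \le \tfrac{\pi^2}{3}\,M\epsilon^{2\gamma} = O(1)$. Since $\gamma < 1/2$ forces $M\epsilon \to 0$, the distances $\lambda_j - \lambda_0$ stay uniformly $\asymp 1$, so the cross terms between $\lambda_0$ and the cluster contribute at least
$$
\sum_{j=1}^M \frac{\delta_0^{2-\alpha}\delta_j^\alpha t_0 t_j}{(\lambda_j-\lambda_0)^2}\ \asymp\ M\epsilon^{\alpha+\gamma} = \epsilon^{\alpha - \gamma} \to \infty
$$
as $\epsilon \to 0^+$, because $\gamma > \alpha$. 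All other terms vanish. Hence $\overline{C}(\alpha) = \infty$.

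The main obstacle is the calibration in part (4): the constraints $\alpha < \gamma$ (needed to make the cross terms blow up) and $\gamma < 1/2$ (needed to keep the cluster short so that both the within-cluster sum and the $\ell^2$ mass stay bounded) carve out a nonempty window for $\gamma$ exactly when $\alpha < 1/2$. At $\alpha = 1/2$ the window collapses, which is consistent with Preissmann's finite upper bound on $\overline{C}(1/2)$; so the threshold is genuinely sharp and any simpler single-cluster test will not reach $\alpha \ge 1/2$.
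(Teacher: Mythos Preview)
Your proof is correct and follows essentially the same route as the paper: parts (1)--(3) are identical (symmetry under $m\leftrightarrow n$, H\"older interpolation, and the convexity--symmetry trick with $\theta=(2-\alpha_1-\alpha_2)/(2(1-\alpha_1))$), and part (4) is the same ``one heavy point adjacent to a dense cluster'' construction. The paper parametrizes part (4) a bit more economically---taking cluster spacing $1/N$, cluster weights $\asymp N^{-1/2}$, and reading off $\overline{C}(\alpha,N)\gg N^{1/2-\alpha}$ directly without an auxiliary $\gamma$---so you might streamline by dropping $\gamma$ and tying everything to a single parameter; also shift your index $0$ to $1$ to match the summation range in \eqref{general-alpha}.
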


Also in Section \ref{C(alpha)theorems}, we determine the minimum value.

\begin{theorem}\label{C(1)exact}
We have $\overline{C}(1)=\frac{\pi^2}{3}$.
\end{theorem}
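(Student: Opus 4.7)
Plan for Theorem \ref{C(1)exact}. For the lower bound $\overline{C}(1) \ge \pi^2/3$, I would test \eqref{general-alpha} at $\alpha=1$ with $\lambda_k = k$ (so $\delta_k = 1$) and $t_n = 1$ for $1 \le n \le N$. The left-hand side becomes $\sum_{1\le m\ne n\le N} 1/(m-n)^2 = 2\sum_{k=1}^{N-1}(N-k)/k^2 = \pi^2 N/3 + O(\log N)$, and dividing by $\sum t_n^2 = N$ and letting $N\to\infty$ gives the required inequality.

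For the upper bound my plan is to apply the elementary inequality $2t_mt_n \le t_m^2 + t_n^2$ to the symmetric kernel $K(m,n) = \delta_m\delta_n/(\lambda_m-\lambda_n)^2$, which yields
\[
\sum_{m\ne n}\frac{\delta_m\delta_n\, t_m t_n}{(\lambda_m-\lambda_n)^2} \le \sum_m t_m^2\, R_m,\qquad R_m := \delta_m \sum_{n\ne m}\frac{\delta_n}{(\lambda_m-\lambda_n)^2},
\]
so that the matter reduces to the uniform row-sum estimate $R_m \le \pi^2/3$. After translating so that $\lambda_m = 0$ and scaling so that $\delta_m = 1$, the definition of $\delta_m$ forces $\lambda_{m+1} \ge 1$ and $\lambda_{m-1}\le-1$; by the reflection symmetry $\lambda \mapsto -\lambda$ it then suffices to prove the one-sided inequality $\sum_{n>m}\delta_n/\lambda_n^2 \le \pi^2/6$, which is saturated in the arithmetic case $\lambda_{m+k} = k$.

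The main obstacle lies in this one-sided bound. The intervals $I_n = [\lambda_n - \delta_n/2,\, \lambda_n + \delta_n/2]$ are pairwise disjoint and contained in $[1/2,\infty)$, so Jensen's inequality for the convex function $x \mapsto 1/x^2$ yields $\delta_n/\lambda_n^2 \le \int_{I_n} dx/x^2$, and hence $\sum_{n>m}\delta_n/\lambda_n^2 \le \int_{1/2}^\infty dx/x^2 = 2$, a bound which exceeds the target $\pi^2/6$. Closing this gap requires exploiting \emph{both} defining inequalities $\delta_n \le \lambda_n - \lambda_{n-1}$ and $\delta_n \le \lambda_{n+1} - \lambda_n$ simultaneously, since using only one of them leaves too much slack (as the naive integral comparison already shows). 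I expect the sharpening to come either from a variational or majorization argument showing that the supremum of the one-sided row sum is attained in the arithmetic-progression limit $\lambda_{m+k} = k$, where the sum equals $\sum_{k=1}^\infty 1/k^2 = \pi^2/6$, or from a refined integral comparison that tracks the deficit in Jensen's inequality across each $I_n$ together with the contribution of the gaps between consecutive intervals; either route suffices to conclude $\overline{C}(1) \le \pi^2/3$.
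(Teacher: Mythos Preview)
Your overall strategy is exactly the paper's: for the lower bound you test on $\lambda_k=k$, $t_n$ constant, and pass to the limit; for the upper bound you apply $2t_mt_n\le t_m^2+t_n^2$ to reduce to the uniform row-sum estimate
\[
\delta_m\sum_{n\ne m}\frac{\delta_n}{(\lambda_m-\lambda_n)^2}\le\frac{\pi^2}{3}.
\]
This is precisely Proposition~\ref{pi^2/3} together with Lemma~\ref{lemma:Preissmann} at $\sigma=2$.

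The gap in your proposal is that you have not actually proved this row-sum bound; you only ``expect'' it to follow from a majorization or refined integral comparison, and you correctly note that the naive Jensen argument over the disjoint intervals $I_n$ gives only $2$ on each side rather than $\pi^2/6$. Your intuition that the extremal configuration is the arithmetic progression is right, and the paper carries this out concretely: Lemma~\ref{lemma:Preissmann} is proved via Shan's method (Lemmas~\ref{lemma:equidistance}--\ref{lemma:F_N(a)UpperBound}). The key steps are (i) a monotonicity lemma (Lemma~\ref{lemma:monotonicity}) showing that increasing any gap $a_\nu$ up to the preceding gap $a_{\nu-1}$ can only increase the weighted sum $F_N$, which reduces to the case of nondecreasing gaps, and then (ii) a convexity comparison (Lemma~\ref{lemma:equidistance}) that, for nondecreasing gaps all $\ge 1$, bounds $\sum a_n f(\lambda_n)$ by $\sum_{j\ge 1} f(j)$. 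Applied with $f(x)=x^{-2}$ this yields exactly $\pi^2/6$ on each side. So the ``majorization to the AP limit'' you anticipate is correct, but it is not a one-line argument: it requires handling the interaction between $\delta_n\le\lambda_n-\lambda_{n-1}$ and $\delta_n\le\lambda_{n+1}-\lambda_n$ through the two-stage reduction above.
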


In Section \ref{negative-section}, we shall prove a new lower bound for $\overline{C}\left(\frac{1}{2}\right)$.

\begin{theorem}\label{main-negative}
We have $\overline{C}\left(\frac{1}{2}\right)\ge0.35047\pi^2$.
\end{theorem}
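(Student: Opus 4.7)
The approach is to exhibit an explicit configuration that witnesses $\overline{C}(1/2)\ge 0.35047\pi^2$. The starting observation is that after the substitution $y_k=\delta_k^{1/2}t_k$ and symmetrization in $m,n$, the Rayleigh quotient associated with \eqref{general-alpha} for $\alpha=1/2$ becomes
\[
R(\lambda,y):=\frac{\sum_{m\ne n}\frac{\delta_m+\delta_n}{2}\cdot\frac{y_my_n}{(\lambda_m-\lambda_n)^2}}{\sum_n y_n^2/\delta_n},
\]
whereas the analogous quotient for $\alpha=1$ (whose supremum equals $\pi^2/3$ by Theorem~\ref{C(1)exact}) has the geometric mean $\sqrt{\delta_m\delta_n}$ in place of the arithmetic mean. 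By AM--GM, the $\alpha=1/2$ quotient strictly dominates the $\alpha=1$ one whenever the $\delta_k$ are not all equal, so the path to $\overline{C}(1/2)>\pi^2/3$ runs through configurations with genuinely varying $\delta_k$.

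Concretely, I would take $(\lambda_k)_{k\in\mathbb{Z}}$ periodic with $p\ge 3$ sites per unit cell of length $P$, and $(t_n)$ the finite truncation of a periodic sequence with matching period. By Bloch decomposition, the bilinear form decomposes into a family of Hermitian $p\times p$ matrices $M(\phi)$, $\phi\in[-\pi,\pi]$, whose entries are closed-form lattice sums obtained from $\sum_{j\in\mathbb{Z}}(j+c)^{-2}=\pi^2/\sin^2(\pi c)$ and the Fourier expansion $\sum_{j\ne 0}e^{ij\phi}/j^2=\phi^2/2-\pi|\phi|+\pi^2/3$ on $[-\pi,\pi]$. Because the symmetrized kernel has nonnegative entries, the entrywise domination $|M(\phi)_{\alpha\beta}|\le M(0)_{\alpha\beta}$ gives $\max_\phi\lambda_{\max}(M(\phi))=\lambda_{\max}(M(0))$, so it suffices to compute and maximize $\lambda_{\max}(M(0))$ over gap shapes.

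One then numerically optimizes $\lambda_{\max}(M(0))$ over the finite-dimensional parameter space (gap lengths modulo scaling). The 2-sublattice alternating model has constant $\delta_k$ and so returns exactly $\pi^2/3$; at least 3 sublattices are required to exploit the AM--GM gap. With a careful parameter search in a 3- or slightly larger sublattice family, one identifies explicit gap lengths at which $\lambda_{\max}(M(0))>0.35047\pi^2$; the bound is certified by replacing the transcendental expressions with rigorous rational enclosures of sufficient precision. Truncating the Perron eigenvector of $M(0)$ to $N$ unit cells and zero-extending yields an admissible finite-support test sequence $(t_n)\ge 0$ whose Rayleigh quotient approaches $\lambda_{\max}(M(0))$ as $N\to\infty$, giving the claimed lower bound.

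The main obstacle is the parameter search. Naive perturbations of the equally-spaced configuration typically decrease the Rayleigh quotient (direct computation with a three-sublattice gap pattern $(a,b,c)$ near $(1,1,1)$ confirms this), so the improving direction must be identified carefully, and the cell structure may need to be enlarged beyond the minimal three-sublattice case. The advantage of the Bloch formulation is that, regardless of how rich the unit cell is, the problem reduces to diagonalizing a fixed finite Hermitian matrix whose entries are elementary functions of the parameters, so the search and the final rigorous certification remain purely finite-dimensional.
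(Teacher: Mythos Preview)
Your periodic/Bloch reduction is exactly Lemma~\ref{lemma:trigonometric-equivalence} in disguise: passing to a periodic gap pattern with unit period and taking the $\phi=0$ sector yields precisely the trigonometric form \eqref{trigonometric}, and your Perron--Frobenius argument that $\phi=0$ dominates is the reason the paper only needs that form. So the framework is correct and parallels the paper.

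The gap is that you have not produced a configuration. The line ``with a careful parameter search\dots one identifies explicit gap lengths at which $\lambda_{\max}(M(0))>0.35047\pi^2$'' is the theorem, not a step toward it. Your own observation that perturbations of the equally spaced lattice typically lower the Rayleigh quotient is a warning sign: the maximizing direction is not a small deformation, and a ``$3$- or slightly larger'' sublattice search is the wrong place to look. The paper's configuration is genuinely two-scale: inside the unit cell one places $K$ coarse points with spacing $A$ together with a dense cluster of $L+1$ points packed into an interval of length $B=1-(K+1)A$, assigns weights $\tau_k=1/\sqrt{K}$ on the coarse part and $\tau_{K+\ell+1}=u/\sqrt{L+1}$ on the cluster, and then sends $L\to\infty$. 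In that limit the cluster sums collapse to closed-form integrals (via $\int_0^1\csc^2\pi(kA+Bx)\,dx$ and the estimate of Lemma~\ref{lemma:L-sum-estimate}), and after optimizing over $u\ge 0$ the problem reduces to maximizing an explicit elementary function $G_K(A)$ over a single real parameter for each fixed $K$; one then checks numerically that $G_5(0.14)>0.35047$. In your language this corresponds to a unit cell with $p\to\infty$ and extremely inhomogeneous gaps, so a bounded-$p$ search would not locate it unless you already knew to take $p$ large and concentrate almost all sites in a short subinterval. That two-scale limiting construction is the missing idea.
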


From Theorem \ref{main-negative}, we deduce that any upper bound for $\overline{C}_1$ obtainable by Theorem \ref{C1C1/2inequality} cannot be smaller than $3.19497$. This method of using Theorem \ref{C1C1/2inequality} is incapable of proving $\overline{C}_1=\pi$.

\section{Preliminaries}\label{S}

%
%

\subsection{Eigenvalues of generalized weighted Hilbert matrices}

Let us consider $N\times N$ matrices $H=\left[h_{mn}\right]$ with entries given by
\begin{equation}\label{weighted-Hilbert-matrices}
h_{mn}\coloneqq
\begin{cases}
\frac{c_mc_n}{\lambda_m-\lambda_n}&\text{if }m\neq n,\\[.1in]
0&\text{if }m=n,
\end{cases}
\end{equation}
where $\left(\lambda_k\right)_{k=-\infty}^\infty$ is a strictly increasing sequence of real numbers and $c_1,\dots,c_N$ are positive real numbers. Since $H$ is skew-Hermitian (i.e., $iH$ is Hermitian), all its eigenvalues are purely imaginary. Let $\left[u_1,\dots,u_N\right]^\top$ be an eigenvector of $H$, and let $i\mu$ be its associated eigenvalue. That is,
$$
\sum_{\substack{n=1\\n\neq m}}^N\frac{c_mc_nu_n}{\lambda_m-\lambda_n}=i\mu u_m
$$
for all $m=1,\dots,N$.

It is well known (see, e.g., \cite[\S~7.4]{Mon1994}) that the numerical radius of a normal matrix is the same as its spectral radius (and its operator norm). Thus, if $i\mu$ has the largest modulus among all eigenvalues of $H$, then
\begin{equation}\label{inequality:numerical_radius}
\left\vert\sum_{m=1}^N\sum_{\substack{n=1\\n\neq m}}^N\frac{c_mc_nz_m\overline{z_n}}{\lambda_m-\lambda_n}\right\vert\le\lvert\mu\rvert\sum_{n=1}^N\left\vert z_n\right\vert^2
\end{equation}
for all complex numbers $z_1,\dots,z_N$. On replacing $z_n$ by $\frac{z_n}{c_n}$, we see that \eqref{inequality:numerical_radius} is equivalent to
\begin{equation}\label{inequality:c_n}
\left\vert\sum_{m=1}^N\sum_{\substack{n=1\\n\neq m}}^N\frac{z_m\overline{z_n}}{\lambda_m-\lambda_n}\right\vert\le\lvert\mu\rvert\sum_{n=1}^N\frac{\left\vert z_n\right\vert^2}{c_n^2}.
\end{equation}

One may obtain the generalized Hilbert inequality \eqref{MV2} with some constant $C_1$ from \eqref{inequality:c_n} by giving an upper bound for the sizes of eigenvalues of $H$ in the case that $c_n^2=\delta_n=\min\left\{\lambda_n-\lambda_{n-1},\lambda_{n+1}-\lambda_n\right\}$. A key result to that end is:
\begin{lemma}\label{identity:eigenv}
Let $\left[u_1,\dots,u_N\right]^\top$ be an eigenvector of $H$, and let $i\mu$ be its associated eigenvalue. Then the identity
\begin{equation}\label{Selberg}
\mu^2\left\vert u_m\right\vert^2=\sum_{\substack{n=1\\n\neq m}}^N\frac{c_m^2c_n^2\left\vert u_n\right\vert^2}{\left(\lambda_m-\lambda_n\right)^2}+2\sum_{\substack{n=1\\n\neq m}}^N\frac{c_m^3c_n\Re\left(\overline{u_m}u_n\right)}{\left(\lambda_m-\lambda_n\right)^2}
\end{equation}
holds for all $m=1,\dots,N$.
\end{lemma}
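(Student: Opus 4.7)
My plan is to start from the eigenvalue equation at index $m$ and compute $\mu^2|u_m|^2$ as $(i\mu u_m)(\overline{i\mu u_m})$. Since $H$ is skew-Hermitian, $\mu$ is real, so multiplying the defining equation by its complex conjugate gives
$$
\mu^2|u_m|^2=\sum_{n\neq m}\sum_{k\neq m}\frac{c_m^2c_nc_ku_n\overline{u_k}}{(\lambda_m-\lambda_n)(\lambda_m-\lambda_k)}.
$$
The diagonal contribution $n=k$ produces exactly the first sum in \eqref{Selberg}, so the task reduces to showing that the off-diagonal portion (indices $n\neq k$, both different from $m$) equals the second sum on the right of \eqref{Selberg}.

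To handle the off-diagonal portion I would apply the partial-fraction identity
$$
\frac{1}{(\lambda_m-\lambda_n)(\lambda_m-\lambda_k)}=\frac{1}{\lambda_n-\lambda_k}\left(\frac{1}{\lambda_m-\lambda_n}-\frac{1}{\lambda_m-\lambda_k}\right),
$$
valid for $n\neq k$, and then symmetrize by swapping $n\leftrightarrow k$ in the second resulting piece. After combining $u_n\overline{u_k}+u_k\overline{u_n}=2\Re(u_n\overline{u_k})$, the off-diagonal portion rewrites as
$$
2\,\Re\sum_{n\neq m}\frac{c_m^2c_nu_n}{\lambda_m-\lambda_n}\sum_{\substack{k\neq n\\k\neq m}}\frac{c_k\overline{u_k}}{\lambda_n-\lambda_k}.
$$

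The crucial step is that the inner sum can be eliminated via the eigenvalue equation applied at index $n$. Writing $\sum_{k\neq n}c_nc_ku_k/(\lambda_n-\lambda_k)=i\mu u_n$, conjugating, and subtracting the missing $k=m$ term yields
$$
\sum_{\substack{k\neq n\\k\neq m}}\frac{c_k\overline{u_k}}{\lambda_n-\lambda_k}=-\frac{i\mu\overline{u_n}}{c_n}-\frac{c_m\overline{u_m}}{\lambda_n-\lambda_m}.
$$
Substituting this back splits the off-diagonal portion into two pieces. The contribution from $c_m\overline{u_m}/(\lambda_n-\lambda_m)$ produces exactly $2\sum_{n\neq m}c_m^3c_n\Re(\overline{u_m}u_n)/(\lambda_m-\lambda_n)^2$, using $(\lambda_m-\lambda_n)(\lambda_n-\lambda_m)=-(\lambda_m-\lambda_n)^2$; this is precisely the second sum in \eqref{Selberg}.

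The only point requiring comment is the contribution from the term $-i\mu\overline{u_n}/c_n$: it produces the sum $-2\mu\,\Re\sum_{n\neq m}ic_m^2|u_n|^2/(\lambda_m-\lambda_n)$. Because every factor inside is real and $\mu$ is real, this expression is purely imaginary and its real part vanishes. I expect this collapse of the would-be error term to be the one structural observation that makes the identity work; the rest of the argument is essentially bookkeeping with partial fractions and index swaps.
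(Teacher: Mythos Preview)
Your argument is correct: the partial-fraction decomposition together with the re-application of the eigenvalue equation at index $n$ yields the identity exactly as you outline, and the vanishing of the $-i\mu\overline{u_n}/c_n$ contribution is indeed the clean structural point. The paper does not actually give a proof of this lemma---it only cites Preissmann and L\'{e}v\^{e}que \cite[Lemma~5~(b)]{PL2013}---and your computation is essentially the standard derivation found there.
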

\begin{proof}
See Preissmann and L\'{e}v\^{e}que \cite[Lemma~5~(b)]{PL2013}.
\end{proof}

%
%

\subsection{A weighted spacing lemma and Shan's method}

The goal of this subsection is to prove:
\begin{lemma}\label{lemma:Preissmann}
Let $\left(\lambda_k\right)_{k=-\infty}^\infty$ be a strictly increasing sequence of real numbers. Denote by $\delta_k$ the minimum between $\lambda_k-\lambda_{k-1}$ and $\lambda_{k+1}-\lambda_k$. Then for real numbers $\sigma>1$ and integers $\ell$, we have
\begin{equation}\label{Preissmann}
\sum_{\substack{k=-\infty\\k\neq\ell}}^\infty\frac{\delta_k}{\left\vert\lambda_k-\lambda_\ell\right\vert^\sigma}\leq\frac{2\zeta(\sigma)}{\delta_\ell^{\sigma-1}}.
\end{equation}
\end{lemma}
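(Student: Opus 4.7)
The plan is to split $\sum_{k \neq \ell}$ into the parts over $k > \ell$ and $k < \ell$ and, by the reflection symmetry $k \mapsto 2\ell - k$ (which preserves both $\delta_k$ and $\lvert\lambda_k - \lambda_\ell\rvert$), reduce to the one-sided bound
$$
\sum_{k > \ell}\frac{\delta_k}{(\lambda_k-\lambda_\ell)^\sigma} \le \frac{\zeta(\sigma)}{\delta_\ell^{\sigma-1}}.
$$
After the harmless rescaling $\lambda_\ell = 0$ and $\delta_\ell = 1$ (so that $\lambda_{\ell+1} \ge 1$), the task becomes $\sum_{k > \ell}\delta_k/\lambda_k^\sigma \le \zeta(\sigma)$.

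The main tool is an integral comparison for the convex, decreasing function $f(x) = 1/x^\sigma$ on $(0,\infty)$. The defining two-sided inequality $\delta_k \le \min(\lambda_k - \lambda_{k-1},\ \lambda_{k+1} - \lambda_k)$ ensures that the intervals $I_k := [\lambda_k - \delta_k/2,\ \lambda_k + \delta_k/2]$ are pairwise disjoint, and for $k > \ell$ they are contained in $(\lambda_\ell,\infty)$. The midpoint inequality for convex $f$ gives $\delta_k f(\lambda_k) \le \int_{I_k} f$, and summing over $k > \ell$ yields
$$
\sum_{k > \ell}\frac{\delta_k}{\lambda_k^\sigma} \le \int_{\bigcup_{k > \ell} I_k}\frac{dx}{x^\sigma}.
$$
The equally spaced configuration $\lambda_k = k - \ell$ saturates the lemma's bound, and in that extremal case the $I_k$'s exactly tile $[1/2,\infty)$; this suggests that the integral on the right should be bounded by $\zeta(\sigma)$ once one accounts for the gaps between consecutive $I_k$'s.

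The main obstacle is extracting the precise constant $\zeta(\sigma)$, rather than one of the weaker constants that naive estimates produce. The crude inclusion $\bigcup I_k \subset [1/2,\infty)$ gives only $\int_{1/2}^\infty dx/x^\sigma = 2^{\sigma-1}/(\sigma-1)$, while the alternative one-sided bound $\delta_k \le \lambda_k - \lambda_{k-1}$ (for $k \ge \ell+2$) telescopes to $\int_{\lambda_{\ell+1}}^\infty dx/x^\sigma \le 1/(\sigma-1)$ and, combined with the boundary term $\delta_{\ell+1}/\lambda_{\ell+1}^\sigma \le 1$, gives $\sigma/(\sigma-1)$; both bounds strictly exceed $\zeta(\sigma)$.

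Closing this last gap is the technical heart of the proof. The plan is to refine the comparison via Abel summation using the cumulative weight $F(t) = \sum_{\lambda_k - \lambda_\ell \le t}\delta_k$, which (by telescoping $\delta_k \le \lambda_k - \lambda_{k-1}$ over consecutive $k$) satisfies $F(t) \le t$, combined with the additional structure imposed by the two-sided constraint $\delta_k \le \min(\cdot,\cdot)$; equivalently one may invoke an exchange/smoothing argument showing that the functional $(\lambda_k)\mapsto\sum\delta_k/(\lambda_k-\lambda_\ell)^\sigma$ is maximized at the equally spaced sequence, where its value is exactly $\zeta(\sigma)\delta_\ell^{1-\sigma}$ per side.
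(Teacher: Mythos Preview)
Your reduction to the one-sided sum and your normalization are fine (though the phrase ``the reflection $k\mapsto 2\ell-k$ preserves $\delta_k$ and $|\lambda_k-\lambda_\ell|$'' is not literally true for a general sequence; what is true is that the sum over $k<\ell$ becomes a sum over $k>\ell$ for the reflected sequence $\mu_k:=-\lambda_{2\ell-k}$, and that is all you need). Your midpoint/interval comparison and your Abel computation are also correct as far as they go, and you rightly observe that both produce constants strictly larger than $\zeta(\sigma)$.

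The genuine gap is the final paragraph. You identify ``closing this last gap'' as the technical heart, and then offer two options without executing either. The Abel route, as you have stated it, cannot succeed: the bound $F(t)\le t$ uses only $\delta_k\le\lambda_k-\lambda_{k-1}$, and plugging it into $\sigma\int F(t)t^{-\sigma-1}\,dt$ recovers exactly the $\sigma/(\sigma-1)$ you already rejected; you say one should combine this with the two-sided constraint $\delta_k\le\min(\cdot,\cdot)$, but you give no indication of \emph{how} that constraint is to enter the Abel framework, and I do not see any natural way to do so that produces $\zeta(\sigma)$. Your second option, an exchange/smoothing argument reducing to the equally spaced extremal case, is the right idea --- but this \emph{is} the entire content of the proof, not a side remark. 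One has to specify the exchange move, verify it is monotone for the functional, and then handle the resulting configuration; none of this is done.

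For comparison, the paper's proof is exactly such a smoothing argument, carried out in detail (Shan's method). Working with the spacing sequence $a_n=(\lambda_{\ell+n}-\lambda_{\ell+n-1})/\delta_\ell$, one shows (Lemma~2.4) that increasing any spacing $a_\nu$ which is smaller than its predecessor $a_{\nu-1}$ can only increase the functional $\sum_n\min\{a_n,a_{n+1}\}\,f(\sum_{m\le n}a_m)$; iterating this replaces $(a_n)$ by its running maximum $\overline a_n=\max_{m\le n}a_m$, a nondecreasing sequence with all terms $\ge 1$. A separate convexity argument (Lemma~2.3) then compares $\sum_n \overline a_n\,f(\overline\lambda_n)$ with $\sum_{j\ge 1}f(j)$. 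The monotonicity step is where the two-sided constraint $\delta_k=\min\{a_n,a_{n+1}\}\cdot\delta_\ell$ is genuinely used, and it is not a one-line observation; your proposal gestures at this step but does not supply it.
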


One can show that equality holds in \eqref{Preissmann} if and only if the sequence $\left(\lambda_{k+1}-\lambda_k\right)_{k=-\infty}^\infty$ is constant, but we shall not treat it here.

Lemma \ref{lemma:Preissmann} is a direct consequence of Lemme 1 of Preissmann \cite{Pre1984}. We present a proof using a method of Shan \cite{Sha1984}, who independently derived Lemma \ref{lemma:Preissmann}. The work of Shan, done at the same time as that of Preissmann, is obscure and hard to obtain. Peng Gao (private communication) translated Shan's argument, which appears in \cite[pp.~590--595]{PP1991}. The next three lemmas are an exposition of Shan's method.

Let $f$ be a real-valued function, defined on the interval $[1,\infty)$. We will assume that $f$ satisfies some (or all) of the following four conditions:
\begin{enumerate}
\item[(a)]$f(\theta x+(1-\theta)y)\le \theta f(x)+(1-\theta)f(y)$ for all $0\le\theta\le1$ and $1\le x\le y$.
\item[(b)]$f(x)\ge f(y)$ for all $1\le x\le y$.
\item[(c)]$f(x)\ge0$ for all $x\ge1$.
\item[(d)]The series $\sum_{j=1}^\infty f(j)$ converges.
\end{enumerate}

We note that (c) follows from (b) and (d), since (b) implies $f(x)\ge\lim_{k\rightarrow\infty}f(k)$ and (d) implies $\lim_{k\rightarrow\infty}f(k)=0$.

\begin{lemma}\label{lemma:equidistance}
Assume that $f:[1,\infty)\rightarrow\mathbb{R}$ satisfies \emph{(a)} and \emph{(b)}. Let $\left(a_n\right)_{n=1}^\infty$ be a sequence of real numbers such that $a_n\ge1$ for all $n$. Set $\lambda_n\coloneqq\sum_{m=1}^na_m$. Then for positive integers $N$, we have
$$\sum_{n=1}^Na_nf\left(\lambda_n\right)\le\sum_{j=1}^{\left\lfloor\lambda_N\right\rfloor}f(j)+\left\{\lambda_N\right\}f\left(\left\lfloor\lambda_N\right\rfloor+1\right),$$
where $\{x\}=x-\lfloor x\rfloor$ denotes the fractional part of $x$.
\end{lemma}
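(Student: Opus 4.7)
My plan is to reduce Lemma \ref{lemma:equidistance} to a local integral inequality and then establish the latter via convexity.

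First, one observes that the right-hand side admits the integral representation
\[
\sum_{j=1}^{\lfloor\lambda_N\rfloor} f(j) + \{\lambda_N\}\, f(\lfloor\lambda_N\rfloor + 1) = \int_0^{\lambda_N} f(\lceil u \rceil)\, du,
\]
which follows by decomposing the integral over the subintervals on which $\lceil u \rceil$ is constant. Therefore it suffices to prove the pointwise inequality
\[
a_n f(\lambda_n) \le \int_{\lambda_{n-1}}^{\lambda_n} f(\lceil u \rceil)\, du \qquad (n = 1, \dots, N);
\]
summing telescopically then yields the lemma.

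To prove this local inequality, fix $n$ and write $\mu = \lambda_{n-1}$, $\nu = \lambda_n$, $K = \lfloor\nu\rfloor$, $J = \lfloor\mu\rfloor + 1$, and $k = K - J$. Since $a_n = \nu - \mu \ge 1$, we have $k \ge 0$, and moreover $\{\nu\} \ge \{\mu\}$ in the boundary case $k = 0$. Define the nonnegative chord slope
\[
\alpha \coloneqq \frac{f(\nu) - f(K+1)}{1 - \{\nu\}} \ge 0
\]
(the sign comes from condition (b)). Convexity (condition (a)), applied in its chord-slope form to the points $j < \nu < K+1$, yields $f(j) - f(\nu) \ge \alpha(\nu - j)$ for every integer $j \le K$; and by definition $f(K+1) - f(\nu) = -\alpha(1 - \{\nu\})$.

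Substituting these bounds into the decomposition
\[
\int_\mu^\nu [f(\lceil u \rceil) - f(\nu)]\, du = (J - \mu)[f(J) - f(\nu)] + \sum_{j=J+1}^K [f(j) - f(\nu)] + \{\nu\}[f(K+1) - f(\nu)]
\]
gives a lower bound $\alpha \cdot S_1$, where a direct computation reveals
\[
S_1 = \frac{k(k+1)}{2} + (\{\nu\} - \{\mu\})(k + \{\nu\}).
\]
It remains to check $S_1 \ge 0$. For $k \ge 1$, one verifies $S_1 > 0$ by bounding the (possibly negative) second term using $\{\mu\} - \{\nu\} < 1 - \{\nu\}$; for $k = 0$, the constraint $\{\nu\} \ge \{\mu\}$ makes the second term nonnegative. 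Hence $\alpha S_1 \ge 0$, proving the local inequality.

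The main obstacle will be the verification that $S_1 \ge 0$; the case $k = 0$ is tight, and this is precisely where the hypothesis $a_n \ge 1$ is indispensable. A second subtlety is the choice of chord: taking the chord from $\nu$ to $K+1$ is the unique choice whose slope simultaneously bounds $f(j) - f(\nu)$ from below for all integers $j \le K$ and matches the constant $\alpha$ used to express $f(K+1) - f(\nu)$, so that the final estimate factors cleanly as $\alpha \cdot S_1$.
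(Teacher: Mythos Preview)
Your proof is correct. Both your argument and the paper's reduce to the same per-term inequality
\[
a_n f(\lambda_n)\le \sum_{j=\lfloor\lambda_{n-1}\rfloor+1}^{\lfloor\lambda_n\rfloor} f(j)-\{\lambda_{n-1}\}f(\lfloor\lambda_{n-1}\rfloor+1)+\{\lambda_n\}f(\lfloor\lambda_n\rfloor+1),
\]
which is precisely your $\int_{\lambda_{n-1}}^{\lambda_n} f(\lceil u\rceil)\,du$, and then sum telescopically. The difference lies only in how this local inequality is proved. The paper first uses convexity on the single interval $[\lfloor\lambda_n\rfloor,\lfloor\lambda_n\rfloor+1]$ to bound $f(\lambda_n)$, then uses monotonicity twice: once to absorb the extra weight $a_n-1$, and once more to spread $(a_n-\{\lambda_n\})f(\lfloor\lambda_n\rfloor)$ over the integers between $\lfloor\lambda_{n-1}\rfloor+1$ and $\lfloor\lambda_n\rfloor$. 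You instead linearize $f$ globally via the chord through $(\nu,f(\nu))$ and $(K+1,f(K+1))$, which reduces everything to the single arithmetic check $S_1\ge 0$. Your verification of $S_1\ge 0$ is fine: for $k\ge 1$ one has $\frac{k(k+1)}{2}-(1-\{\nu\})(k+\{\nu\})=\frac{k(k-1)}{2}+(k-1)\{\nu\}+\{\nu\}^2\ge 0$, and the $k=0$ case is where $a_n\ge 1$ enters. The paper's route is a bit shorter and keeps the roles of (a) and (b) visibly separate; your integral viewpoint makes the telescoping structure transparent from the outset and packages the combinatorics into a single clean identity for $S_1$.
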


\begin{proof}
By the convexity of $f$, we have
\begin{equation}\label{2}
f\left(\lambda_n\right)\le\left(1-\left\{\lambda_n\right\}\right)f\left(\left\lfloor\lambda_n\right\rfloor\right)+\left\{\lambda_n\right\}f\left(\left\lfloor\lambda_n\right\rfloor+1\right).
\end{equation}
Moreover, since $a_n\ge1$ and $f$ is weakly decreasing, it follows that
\begin{equation}\label{3}
\left(a_n-1\right)f\left(\lambda_n\right)\leq\left(a_n-1\right)f\left(\left\lfloor\lambda_n\right\rfloor\right).
\end{equation}
On summing \eqref{2} and \eqref{3}, we obtain
\begin{equation}\label{4}
a_nf\left(\lambda_n\right)\le\left(a_n-\left\{\lambda_n\right\}\right)f\left(\left\lfloor\lambda_n\right\rfloor\right)+\left\{\lambda_n\right\}f\left(\left\lfloor\lambda_n\right\rfloor+1\right).
\end{equation}
Now, we consider the first term on the right side of \eqref{4} and note that $\lambda_n=\lambda_{n-1}+a_n\ge\lambda_{n-1}+1$:
\begin{align*}
\left(a_n-\left\{\lambda_n\right\}\right)f\left(\left\lfloor\lambda_n\right\rfloor\right)&=\left(\left\lfloor\lambda_n\right\rfloor-\left\lfloor \lambda_{n-1}\right\rfloor-1\right)f\left(\left\lfloor\lambda_n\right\rfloor\right)+\left(1-\left\{\lambda_{n-1}\right\}\right)f\left(\left\lfloor\lambda_n\right\rfloor\right)\\
&\le\sum_{j=\left\lfloor\lambda_{n-1}\right\rfloor+2}^{\left\lfloor\lambda_n\right\rfloor}f(j)+\left(1-\left\{\lambda_{n-1}\right\}\right)f\left(\left\lfloor\lambda_{n-1}\right\rfloor+1\right)\\
&=\sum_{j=\left\lfloor\lambda_{n-1}\right\rfloor+1}^{\left\lfloor\lambda_n\right\rfloor}f(j)-\left\{\lambda_{n-1}\right\}f\left(\left\lfloor\lambda_{n-1}\right\rfloor+1\right).
\end{align*}
On inserting this in \eqref{4}, we get
\begin{equation}\label{5}
a_nf\left(\lambda_n\right)\le\sum_{j=\left\lfloor\lambda_{n-1}\right\rfloor+1}^{\left\lfloor\lambda_n\right\rfloor}f(j)-\left\{\lambda_{n-1}\right\}f\left(\left\lfloor\lambda_{n-1}\right\rfloor+1\right)+\left\{\lambda_n\right\}f\left(\left\lfloor\lambda_n\right\rfloor+1\right).
\end{equation}
The result follows by summing \eqref{5} over $n=1,\dots,N$; the resulting sum on the right side is a telescoping sum.
\end{proof}

In what follows, we consider
\begin{equation}
F_N(\mathbf{x})\coloneqq\sum_{n=1}^N\min\left\{x_n,x_{n+1}\right\}f\left(\sum_{m=1}^nx_m\right),
\end{equation}
where $\mathbf{x}=\left(x_n\right)_{n=1}^\infty$ is a sequence of positive real numbers with $x_1\ge1$.

\begin{lemma}\label{lemma:monotonicity}
Assume that $f:[1,\infty)\rightarrow\mathbb{R}$ satisfies \emph{(a)}--\emph{(c)}. Let $\mathbf{a}=\left(a_n\right)_{n=1}^\infty$ be a sequence of positive real numbers with $a_1\ge1$. Suppose that $\nu\ge2$ is an integer such that $a_{\nu-1}>a_\nu$. Let $0<\varepsilon\le a_{\nu-1}-a_\nu$. Define $\mathbf{b}=\left(b_n\right)_{n=1}^\infty$ by
$$
b_n\coloneqq
\begin{cases}
a_n&\text{for }n\neq\nu,\\
a_\nu+\varepsilon&\text{for }n=\nu.
\end{cases}
$$
Then for positive integers $N$, we have
\begin{equation}\label{F_Nmonovariant}
F_N(\mathbf{a})\le F_N(\mathbf{b}).
\end{equation}
\end{lemma}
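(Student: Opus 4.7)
My first step is to identify exactly which summands of $F_N$ change when $a_\nu$ is replaced by $a_\nu+\varepsilon$. Writing $S_n=\sum_{m=1}^n a_m$ and $c_n(\mathbf{x})=\min\{x_n,x_{n+1}\}$, the factor $c_n$ depends on $a_\nu$ only for $n\in\{\nu-1,\nu\}$, while the partial sum $S_n$ is shifted by $+\varepsilon$ for every $n\ge\nu$. The hypothesis $\varepsilon\le a_{\nu-1}-a_\nu$ forces $a_\nu+\varepsilon\le a_{\nu-1}$, so the $(\nu-1)$-th summand gains exactly $\varepsilon f(S_{\nu-1})$. I would split the $\nu$-th summand as
$$
c_\nu(\mathbf{b})f(S_\nu+\varepsilon)-c_\nu(\mathbf{a})f(S_\nu)=\bigl[c_\nu(\mathbf{b})-c_\nu(\mathbf{a})\bigr]f(S_\nu+\varepsilon)+c_\nu(\mathbf{a})\bigl[f(S_\nu+\varepsilon)-f(S_\nu)\bigr],
$$
noting that $c_\nu(\mathbf{b})\ge c_\nu(\mathbf{a})$ since $\min\{\cdot,a_{\nu+1}\}$ is nondecreasing in its first argument. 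For each $n>\nu$ the change is just $c_n(\mathbf{a})[f(S_n+\varepsilon)-f(S_n)]$. Collecting,
$$
F_N(\mathbf{b})-F_N(\mathbf{a})=\varepsilon f(S_{\nu-1})+\bigl[c_\nu(\mathbf{b})-c_\nu(\mathbf{a})\bigr]f(S_\nu+\varepsilon)+\sum_{n=\nu}^N c_n(\mathbf{a})\bigl[f(S_n+\varepsilon)-f(S_n)\bigr],
$$
in which the first two pieces are nonnegative by (c), so the task reduces to showing that the (nonpositive) tail has magnitude at most $\varepsilon f(S_{\nu-1})$.

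To bound the tail, I would first apply $c_n(\mathbf{a})\le a_n$ together with $f(S_n+\varepsilon)-f(S_n)\le 0$ (from (b)) to pass to
$$
\sum_{n=\nu}^N c_n(\mathbf{a})\bigl[f(S_n+\varepsilon)-f(S_n)\bigr]\ge-\sum_{n=\nu}^N a_n\,g(S_n),
$$
where $g(t):=f(t)-f(t+\varepsilon)$. By (b) and (c), $g$ is nonnegative; and convexity (a) implies that $f(t+\varepsilon)-f(t)$ is nondecreasing in $t$, so $g$ is nonincreasing. Since $a_n=S_n-S_{n-1}$ and $g$ is nonincreasing, $a_n g(S_n)\le\int_{S_{n-1}}^{S_n}g(t)\,dt$; summing telescopes to $\int_{S_{\nu-1}}^{S_N}g(t)\,dt$. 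A change of variable then gives
$$
\int_{S_{\nu-1}}^{S_N}g(t)\,dt=\int_{S_{\nu-1}}^{S_{\nu-1}+\varepsilon}f(t)\,dt-\int_{S_N}^{S_N+\varepsilon}f(t)\,dt\le\varepsilon f(S_{\nu-1}),
$$
using $f\ge 0$ to drop the negative term and monotonicity to bound the remaining integral by $\varepsilon f(S_{\nu-1})$. (The degenerate case $S_N<S_{\nu-1}+\varepsilon$ yields the same bound via the trivial estimate $(S_N-S_{\nu-1})f(S_{\nu-1})<\varepsilon f(S_{\nu-1})$.) Combining these estimates gives $F_N(\mathbf{b})\ge F_N(\mathbf{a})$.

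The only real obstacle I foresee is the bookkeeping at the transitional index $n=\nu$, where both the min-factor and the partial sum shift simultaneously; the split above cleanly separates a manifestly nonnegative ``min-change'' piece from an $f$-change piece that can be folded into the main telescoping estimate. Boundary cases in which $\nu>N$ reduce immediately to monotonicity of $\min$ in its first slot (or are vacuous when $\nu>N+1$).
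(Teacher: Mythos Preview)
Your proof is correct and follows the same overall plan as the paper: isolate the exact gain $\varepsilon f(S_{\nu-1})$ at index $\nu-1$, drop the nonnegative min-change term at index $\nu$, and show that the nonpositive tail $\sum_{n\ge\nu} c_n(\mathbf a)\bigl[f(S_n+\varepsilon)-f(S_n)\bigr]$ is dominated in absolute value by $\varepsilon f(S_{\nu-1})$.

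The only difference is in how the tail is telescoped. The paper applies the convexity slope inequality
\[
\frac{f(\lambda_n+\varepsilon)-f(\lambda_n)}{\varepsilon}\ \ge\ \frac{f(\lambda_n)-f(\lambda_{n-1})}{a_n}
\]
directly, replaces $c_n(\mathbf a)/a_n\le 1$, and telescopes discretely to obtain $F_N(\mathbf b)-F_N(\mathbf a)\ge \varepsilon f(\lambda_N)\ge 0$. You instead observe that $g(t)=f(t)-f(t+\varepsilon)$ is nonincreasing (which is the same slope-monotonicity fact), bound $a_n g(S_n)\le\int_{S_{n-1}}^{S_n}g$, and telescope the integral. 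Both routes use exactly the same ingredients from (a)--(c); the paper's version has the minor aesthetic advantage of staying purely discrete, so one never needs to remark that convexity makes $f$ locally integrable.
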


\begin{proof}
If $N\le\nu-2$, then \eqref{F_Nmonovariant} is an identity. So let us assume that $N\ge\nu-1$. Put $\lambda_n\coloneqq\sum_{m=1}^na_m$. It follows from the definition of $b_n$ that
$$
\min\left\{b_n,b_{n+1}\right\}-\min\left\{a_n,a_{n+1}\right\}
\begin{cases}
=\varepsilon&\text{if }n=\nu-1,\\
\ge0&\text{if }n=\nu,\\
=0&\text{otherwise,}
\end{cases}
$$
$$
\sum_{m=1}^nb_m=
\begin{cases}
\lambda_n&\text{for }n\le\nu-1,\\
\lambda_n+\varepsilon&\text{for }n\ge\nu.
\end{cases}
$$
By the nonnegativity of $f$, $\min\left\{b_\nu,b_{\nu+1}\right\}f\left(\lambda_\nu+\varepsilon\right)\ge\min\left\{a_\nu,a_{\nu+1}\right\}f\left(\lambda_\nu+\varepsilon\right)$. So
\begin{equation}\label{F_n-diff}
F_N(\mathbf{b})-F_N(\mathbf{a})\ge\varepsilon f\left(\lambda_{\nu-1}\right)+\sum_{n=\nu}^N\min\left\{a_n,a_{n+1}\right\}\left(f\left(\lambda_n+\varepsilon\right)-f\left(\lambda_n\right)\right).
\end{equation}
By the convexity of $f$, it follows that
$$
\frac{f\left(\lambda_n+\varepsilon\right)-f\left(\lambda_n\right)}{\varepsilon}\ge\frac{f\left(\lambda_n\right)-f\left(\lambda_{n-1}\right)}{a_n}
$$
for all $n\ge2$. So \eqref{F_n-diff} implies that
\begin{align*}
F_N(\mathbf{b})-F_N(\mathbf{a})&\ge\varepsilon f\left(\lambda_{\nu-1}\right)+\varepsilon\sum_{n=\nu}^N\frac{\min\left\{a_n,a_{n+1}\right\}}{a_n}\left(f\left(\lambda_n\right)-f\left(\lambda_{n-1}\right)\right)\\
&\ge\varepsilon f\left(\lambda_{\nu-1}\right)+\varepsilon\sum_{n=\nu}^N\left(f\left(\lambda_n\right)-f\left(\lambda_{n-1}\right)\right)\\
&=\varepsilon f\left(\lambda_N\right)\ge0.
\end{align*}
Hence $F_N(\mathbf{a})\le F_N(\mathbf{b})$.
\end{proof}

We now prove an upper bound for $F_N(\mathbf{a})$ that depends only on $f$.

\begin{lemma}\label{lemma:F_N(a)UpperBound}
Assume that $f:[1,\infty)\rightarrow\mathbb{R}$ satisfies \emph{(a)}--\emph{(d)}. Let $\mathbf{a}=\left(a_n\right)_{n=1}^\infty$ be a sequence of positive real numbers with $a_1\ge1$. Then for positive integers $N$, we have
\begin{equation}\label{F_N(a)Bound}
F_N(\mathbf{a})\le\sum_{j=1}^\infty f(j).
\end{equation}
\end{lemma}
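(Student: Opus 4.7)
The plan is to use Lemma~\ref{lemma:monotonicity} to reduce to the case where the initial segment $a_1,\ldots,a_{N+1}$ is non-decreasing, and then apply Lemma~\ref{lemma:equidistance} directly.

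First I would perform a single left-to-right pass through the first $N+1$ terms: for $\nu = 2,3,\ldots,N+1$ in order, if the current $a_{\nu-1}$ exceeds the current $a_\nu$, replace $a_\nu$ by $a_{\nu-1}$. Each such replacement is an instance of Lemma~\ref{lemma:monotonicity} with $\varepsilon = a_{\nu-1}-a_\nu$, so $F_N$ only increases. The standing hypotheses persist throughout: the terms remain positive (we only raise values), and $a_1\ge 1$ is never touched, so by induction every modified entry satisfies $b_n\ge b_1\ge 1$ for $n\le N+1$. Let $\mathbf{b}=(b_n)_{n=1}^\infty$ denote the resulting sequence; it is non-decreasing through position $N+1$, and $F_N(\mathbf{a})\le F_N(\mathbf{b})$. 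One can additionally set $b_n:=1$ for $n\ge N+2$ without affecting $F_N(\mathbf{b})$, so the global hypothesis of Lemma~\ref{lemma:equidistance} is met.

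Since $\min\{b_n,b_{n+1}\}=b_n$ for $n=1,\ldots,N$, we have
$$
F_N(\mathbf{b}) = \sum_{n=1}^N b_n f\Bigl(\sum_{m=1}^n b_m\Bigr).
$$
Writing $\lambda_n = \sum_{m=1}^n b_m$, Lemma~\ref{lemma:equidistance} yields
$$
F_N(\mathbf{b}) \le \sum_{j=1}^{\lfloor\lambda_N\rfloor} f(j) + \{\lambda_N\}\,f\bigl(\lfloor\lambda_N\rfloor + 1\bigr).
$$
Since $0\le\{\lambda_N\}<1$ and $f\ge 0$ by condition (c) (itself a consequence of (b) and (d), as observed in the excerpt), the right-hand side is bounded above by $\sum_{j=1}^{\lfloor\lambda_N\rfloor + 1} f(j) \le \sum_{j=1}^\infty f(j)$, and condition (d) ensures this series is finite. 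Combining with $F_N(\mathbf{a})\le F_N(\mathbf{b})$ gives \eqref{F_N(a)Bound}.

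The whole argument is really just Lemma~\ref{lemma:monotonicity} used as a monovariant to flatten the descents of $\mathbf{a}$, followed by Lemma~\ref{lemma:equidistance}. The only obstacle worth flagging is verifying that the reduction is not circular and terminates: since we only ever modify the first $N+1$ coordinates, and each of the $N$ potential descents is eliminated in at most one step of the left-to-right pass, the process is finite and the hypotheses of both lemmas remain valid at each stage.
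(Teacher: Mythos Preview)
Your proof is correct and is essentially the same as the paper's. Your left-to-right pass produces exactly the running-maximum sequence $\overline{a}_n=\max\{a_m:1\le m\le n\}$ that the paper defines directly, and from there both arguments apply Lemma~\ref{lemma:equidistance} and the nonnegativity of $f$ in the same way.
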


By taking $a_n=1$ for all $n$ and letting $N\rightarrow\infty$, we see that \eqref{F_N(a)Bound} is sharp.

\begin{proof}
Define a sequence $\mathbf{\overline{a}}=\left(\overline{a}_n\right)_{n=1}^\infty$ by $\overline{a}_n\coloneqq\max\left\{a_m:m=1,\dots,n\right\}$. Then $\overline{a}_{n+1}\ge\overline{a}_n$ for all $n$ and $\overline{a}_1=a_1\ge1$. Let $N$ be a positive integer. By applying Lemma \ref{lemma:monotonicity}, with $\varepsilon=a_{\nu-1}-a_\nu$, as many times as we need, we see that
\begin{equation}\label{apply-monotonicity}
F_N\left(\mathbf{a}\right)\le F_N\left(\mathbf{\overline{a}}\right)=\sum_{n=1}^N\overline{a}_nf\left(\overline{\lambda}_n\right),
\end{equation}
where $\overline{\lambda}_n\coloneqq\sum_{m=1}^n\overline{a}_m$.

By Lemma \ref{lemma:equidistance} and the nonnegativity of $f$, the right side of \eqref{apply-monotonicity} is
\begin{equation}\label{apply-equidistance}
\sum_{n=1}^N\overline{a}_nf\left(\overline{\lambda}_n\right)\le\sum_{j=1}^{\left\lfloor\overline{\lambda}_N\right\rfloor}f(j)+\left\{\overline{\lambda}_N\right\}f\left(\left\lfloor\overline{\lambda}_N\right\rfloor+1\right)\le\sum_{j=1}^\infty f(j).
\end{equation}
The result \eqref{F_N(a)Bound} follows by combining \eqref{apply-monotonicity} and \eqref{apply-equidistance}.
\end{proof}

We are now ready to prove Lemma \ref{lemma:Preissmann}.

\begin{proof}[Proof~of~Lemma~\emph{\ref{lemma:Preissmann}}]
Let $\ell$ be an integer. Define sequences $\mathbf{a}=\left(a_n\right)_{n=1}^\infty$ and $\mathbf{b}=\left(b_n\right)_{n=1}^\infty$ by
$$
a_n\coloneqq\frac{\lambda_{\ell+n}-\lambda_{\ell+n-1}}{\delta_\ell}\quad\text{and}\quad b_n\coloneqq\frac{\lambda_{\ell-n+1}-\lambda_{\ell-n}}{\delta_\ell},
$$
for all $n$. Then $\mathbf{a}$ and $\mathbf{b}$ are sequences of positive real numbers with
$$
a_1=\frac{\lambda_{\ell+1}-\lambda_\ell}{\delta_\ell}\ge1\quad\text{and}\quad b_1=\frac{\lambda_\ell-\lambda_{\ell-1}}{\delta_\ell}\ge1.
$$
We have
$$
\min\left\{a_n,a_{n+1}\right\}=\frac{\delta_{\ell+n}}{\delta_\ell}\quad\text{and}\quad\min\left\{b_n,b_{n+1}\right\}=\frac{\delta_{\ell-n}}{\delta_\ell},
$$
$$
\sum_{m=1}^na_m=\frac{\lambda_{\ell+n}-\lambda_\ell}{\delta_\ell}\quad\text{and}\quad\sum_{m=1}^nb_m=\frac{\lambda_\ell-\lambda_{\ell-n}}{\delta_\ell}.
$$
Let $\sigma>1$. Applying Lemma \ref{lemma:F_N(a)UpperBound} with $f(x)=\frac{1}{x^\sigma}$, we obtain
\begin{align*}
\delta_\ell^{\sigma-1}\sum_{\substack{k=\ell-N\\k\neq\ell}}^{\ell+N}\frac{\delta_k}{\left\vert\lambda_k-\lambda_\ell\right\vert^\sigma}&=\delta_\ell^{\sigma-1}\sum_{n=1}^N\left(\frac{\delta_{\ell+n}}{\left(\lambda_{\ell+n}-\lambda_\ell\right)^\sigma}+\frac{\delta_{\ell-n}}{\left(\lambda_\ell-\lambda_{\ell-n}\right)^\sigma}\right)\\
&=F_N(\mathbf{a})+F_N(\mathbf{b})\\
&\le2\sum_{j=1}^\infty f(j)=2\zeta(\sigma).
\end{align*}
The result \eqref{Preissmann} follows by letting $N\to\infty$.
\end{proof}

\section{Proofs of Theorems \ref{C1C1/2inequality} and \ref{Preissmann'sTheorem}}\label{C(1/2)theorems}

%
%

\subsection{Proof of Theorem \ref{C1C1/2inequality}}

\begin{proposition}\label{TwoForms}
Let $N$ be a positive integer. Let $\left(\lambda_k\right)_{k=-\infty}^\infty$ be a strictly increasing sequence of real numbers. Denote by $\delta_k$ the minimum between $\lambda_k-\lambda_{k-1}$ and $\lambda_{k+1}-\lambda_k$. Assume that $C_3$ is a positive constant such that the inequality
\begin{equation}\label{positive-symmetric}
\sum_{m=1}^N\sum_{\substack{n=1\\n\neq m}}^N\frac{\delta_m^\frac{3}{2}\delta_n^\frac{1}{2}t_mt_n}{\left(\lambda_m-\lambda_n\right)^2}\le C_3\sum_{n=1}^Nt_n^2
\end{equation}
holds for all nonnegative real numbers $t_1,\dots,t_N$. Then the inequality \eqref{MV2}  holds for all complex numbers $z_1,\dots,z_N$ with the constant $C_1=\sqrt{\frac{\pi^2}{3}+2C_3}$.
\end{proposition}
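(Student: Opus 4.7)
The plan is to use the eigenvalue framework of Section \ref{S} with the choice $c_n = \sqrt{\delta_n}$. With this choice, the matrix $H$ defined in \eqref{weighted-Hilbert-matrices} is skew-Hermitian, hence normal, and the rescaling $z_n \mapsto z_n/\sqrt{\delta_n}$ converts \eqref{inequality:numerical_radius} into precisely \eqref{MV2}. Since the numerical radius of a normal matrix coincides with its spectral radius, the problem reduces to showing that every eigenvalue $i\mu$ of $H$ satisfies $\mu^2 \le \pi^2/3 + 2C_3$.

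Fix an eigenvector $[u_1,\dots,u_N]^\top$ of $H$ with eigenvalue $i\mu$. I will invoke the identity \eqref{Selberg} from Lemma \ref{identity:eigenv}, substitute $c_m^2c_n^2 = \delta_m\delta_n$ and $c_m^3c_n = \delta_m^{3/2}\delta_n^{1/2}$, and sum over $m = 1,\dots,N$ to obtain
$$\mu^2\sum_{m=1}^N|u_m|^2 = S_1 + 2S_2,$$
where
$$S_1 = \sum_{m=1}^N\sum_{\substack{n=1\\n\neq m}}^N \frac{\delta_m\delta_n|u_n|^2}{(\lambda_m-\lambda_n)^2}, \qquad S_2 = \sum_{m=1}^N\sum_{\substack{n=1\\n\neq m}}^N \frac{\delta_m^{3/2}\delta_n^{1/2}\Re(\overline{u_m}u_n)}{(\lambda_m-\lambda_n)^2}.$$
The two sums will be bounded separately and recombined.

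For $S_1$, I will swap the order of summation to get $S_1 = \sum_n \delta_n|u_n|^2 \sum_{m\neq n} \delta_m/(\lambda_m-\lambda_n)^2$. Because all terms are nonnegative, the inner sum is at most the analogous sum over all integers $m \neq n$, which by Lemma \ref{lemma:Preissmann} with $\sigma = 2$ is at most $2\zeta(2)/\delta_n = \pi^2/(3\delta_n)$. This yields $S_1 \le (\pi^2/3)\sum_n|u_n|^2$. For $S_2$, I will use the elementary bound $\Re(\overline{u_m}u_n) \le |u_m||u_n|$ together with the hypothesis \eqref{positive-symmetric} applied with $t_n = |u_n|$ to get $S_2 \le C_3\sum_n|u_n|^2$. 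Adding the estimates gives $\mu^2 \le \pi^2/3 + 2C_3$, which completes the proof.

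The calculation is essentially bookkeeping, and I do not expect a serious obstacle. The one point that must be matched carefully is the asymmetric weight $\delta_m^{3/2}\delta_n^{1/2}$ produced by the second term of Selberg's identity: this is exactly the $\alpha = 1/2$ instance of the parametric family \eqref{general-alpha}, and it is precisely this matching that forces the hypothesis \eqref{positive-symmetric} to appear in the form stated.
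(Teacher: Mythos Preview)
Your proposal is correct and follows essentially the same argument as the paper: choose $c_n=\sqrt{\delta_n}$, sum Selberg's identity \eqref{Selberg} over $m$, bound the diagonal-type sum via Lemma~\ref{lemma:Preissmann} with $\sigma=2$, and bound the cross term by replacing $\Re(\overline{u_m}u_n)$ with $|u_m||u_n|$ and invoking the hypothesis \eqref{positive-symmetric}. The only cosmetic difference is that the paper normalizes the eigenvector to be a unit vector at the outset, whereas you carry $\sum_m|u_m|^2$ through and divide at the end.
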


\begin{proof}
Suppose that \eqref{positive-symmetric} holds. Let $\left[u_1,\dots,u_N\right]^\top$ be a unit eigenvector of $H=\left[h_{mn}\right]$, where $h_{mn}$ are given by \eqref{weighted-Hilbert-matrices} with $c_n=\sqrt{\delta_n}$, and let $i\mu$ be the eigenvalue associated with this eigenvector. On applying Lemma \ref{identity:eigenv} and summing \eqref{Selberg} over $m$, we get
\begin{equation}\label{unweighted-identity}
\mu^2=\sum_{m=1}^N\sum_{\substack{n=1\\n\neq m}}^N\frac{\delta_m\delta_n\left\vert u_n\right\vert^2}{\left(\lambda_m-\lambda_n\right)^2}+2\sum_{m=1}^N\sum_{\substack{n=1\\n\neq m}}^N\frac{\delta_m^\frac{3}{2}\delta_n^\frac{1}{2}\Re\left(\overline{u_m}u_n\right)}{\left(\lambda_m-\lambda_n\right)^2}\le S+2T,
\end{equation}
where $S$ and $T$ are given by
$$
S\coloneqq\sum_{m=1}^N\sum_{\substack{n=1\\n\neq m}}^N\frac{\delta_m\delta_n\left\vert u_n\right\vert^2}{\left(\lambda_m-\lambda_n\right)^2}\quad\text{and}\quad T\coloneqq\sum_{m=1}^N\sum_{\substack{n=1\\n\neq m}}^N\frac{\delta_m^\frac{3}{2}\delta_n^\frac{1}{2}\left\vert u_m\right\vert\left\vert u_n\right\vert}{\left(\lambda_m-\lambda_n\right)^2}.
$$
On one hand, by Lemma \ref{lemma:Preissmann}, we obtain
\begin{equation}\label{S-bound}
S=\sum_{n=1}^N\delta_n\left\vert u_n\right\vert^2\left(\sum_{\substack{m=1\\m\neq n}}^N\frac{\delta_m}{\left(\lambda_m-\lambda_n\right)^2}\right)\le\sum_{n=1}^N\delta_n\left\vert u_n\right\vert^2\left(\frac{\pi^2}{3\delta_n}\right)=\frac{\pi^2}{3}.
\end{equation}
On the other hand, substituting $t_n=\left\vert u_n\right\vert$ in \eqref{positive-symmetric} gives
\begin{equation}\label{T-bound}
T\le C_3.
\end{equation}
It follows from \eqref{unweighted-identity}, \eqref{S-bound}, and \eqref{T-bound} that
\begin{equation}\label{mu-bound1}
\vert\mu\vert\le\sqrt{S+2T}\le\sqrt{\frac{\pi^2}{3}+2C_3}.
\end{equation}
By the argument preceding \eqref{inequality:c_n}, we deduce from \eqref{inequality:c_n} and \eqref{mu-bound1} that \eqref{MV2} holds with $C_1=\sqrt{\frac{\pi^2}{3}+2C_3}$.
\end{proof}

One weak point in the proof of Proposition \ref{TwoForms} is the bound in \eqref{unweighted-identity}, where we disregard cancellation between terms.

\begin{proof}[Proof~of~Theorem~\emph{\ref{C1C1/2inequality}}]
Since \eqref{positive-symmetric} holds with $C_3=\overline{C}\left(\frac{1}{2}\right)$, it follows by Proposition \ref{TwoForms} that \eqref{MV2} holds with $C_1=\sqrt{\frac{\pi^2}{3}+2\overline{C}\left(\frac{1}{2}\right)}$. Hence the result follows.
\end{proof}

%
%

\subsection{Proof of Theorem \ref{Preissmann'sTheorem}}

\begin{lemma}\label{PreissmannLemme6}
Let $\left(\lambda_k\right)_{k=-\infty}^\infty$ be a strictly increasing sequence of real numbers. Denote by $\delta_k$ the minimum between $\lambda_k-\lambda_{k-1}$ and $\lambda_{k+1}-\lambda_k$. Then for distinct integers $\ell$ and $m$, we have
\begin{equation}
\sum_{\substack{k=-\infty\\k\neq\ell\\k\neq m}}^\infty\frac{\delta_k}{\left(\lambda_k-\lambda_\ell\right)^2\left(\lambda_k-\lambda_m\right)^2}\le\frac{\pi^2\left(\delta_\ell+\delta_m\right)}{3\delta_\ell\delta_m\left(\lambda_\ell-\lambda_m\right)^2}-\frac{3\left(\delta_\ell+\delta_m\right)}{\left(\lambda_\ell-\lambda_m\right)^4}.
\end{equation}
\end{lemma}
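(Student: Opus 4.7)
The plan is to reduce the estimate via partial fractions to three more elementary sums. Setting $d \coloneqq \lambda_m - \lambda_\ell$, the algebraic identity
$$\frac{1}{(x-\lambda_\ell)^2(x-\lambda_m)^2} = \frac{1}{d^2}\left(\frac{1}{x-\lambda_\ell} - \frac{1}{x-\lambda_m}\right)^2 = \frac{1}{d^2}\left[\frac{1}{(x-\lambda_\ell)^2} + \frac{1}{(x-\lambda_m)^2} - \frac{2}{(x-\lambda_\ell)(x-\lambda_m)}\right]$$
lets me rewrite the left-hand side of the lemma as $\frac{1}{d^2}(A + B - 2T)$, where $A \coloneqq \sum_{k \neq \ell, m}\frac{\delta_k}{(\lambda_k-\lambda_\ell)^2}$, $B \coloneqq \sum_{k \neq \ell, m}\frac{\delta_k}{(\lambda_k-\lambda_m)^2}$, and $T \coloneqq \sum_{k \neq \ell, m}\frac{\delta_k}{(\lambda_k-\lambda_\ell)(\lambda_k-\lambda_m)}$.

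For $A$ and $B$ I would apply Lemma~\ref{lemma:Preissmann} with $\sigma = 2$, after subtracting the now-absent $k = m$ and $k = \ell$ terms, to obtain
$$A \le \frac{\pi^2}{3\delta_\ell} - \frac{\delta_m}{d^2} \qquad \text{and} \qquad B \le \frac{\pi^2}{3\delta_m} - \frac{\delta_\ell}{d^2}.$$
After dividing by $d^2$, these bounds together give $\frac{A+B}{d^2} \le \frac{\pi^2(\delta_\ell+\delta_m)}{3\delta_\ell\delta_m d^2} - \frac{\delta_\ell+\delta_m}{d^4}$, which matches the leading $\pi^2$-term of the target and supplies one of the three units in the $-3(\delta_\ell+\delta_m)/d^4$ correction. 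Closing the remaining $-2(\delta_\ell+\delta_m)/d^4$ gap is equivalent to proving the lower bound $T \ge (\delta_\ell + \delta_m)/d^2$.

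I expect that lower bound on $T$ to be the main obstacle. The sum $T$ is not a sum of nonnegative terms: by the partial-fraction identity $\frac{1}{(\lambda_k-\lambda_\ell)(\lambda_k-\lambda_m)} = \frac{1}{d}\bigl(\frac{1}{\lambda_k-\lambda_m} - \frac{1}{\lambda_k-\lambda_\ell}\bigr)$, contributions from $\lambda_k$ lying outside the interval between $\lambda_\ell$ and $\lambda_m$ are positive while those strictly inside are negative, and a naive term-by-term estimate is too crude. I would attempt the bound via a monotonicity argument in the spirit of Lemma~\ref{lemma:monotonicity}, aiming to show that $T$ is minimized precisely when the gap sequence $(\lambda_{k+1}-\lambda_k)$ is constant. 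In that arithmetic case, a direct telescoping computation using the same partial-fraction identity yields $T = (\delta_\ell + \delta_m)/d^2$ with equality, so the desired inequality would follow.
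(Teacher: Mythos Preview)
The paper does not prove this lemma itself; it simply cites Preissmann \cite[Lemme~6]{Pre1984}. So there is no in-paper argument to compare against, only your proposal.

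Your partial-fraction decomposition and the bounds on $A$ and $B$ via Lemma~\ref{lemma:Preissmann} are correct, and your reduction is accurate: closing the argument is equivalent to the lower bound $T \ge (\delta_\ell+\delta_m)/d^2$. Unfortunately that lower bound is \emph{false} in general, so the plan as stated cannot succeed. Take $\ell=0$, $m=2$ with $\lambda_0=0$, $\lambda_1=1$, $\lambda_2=2$, and place the remaining $\lambda_k$ in an arithmetic progression of large step $R$ (say $\lambda_k=kR$ for $k\le-1$ and $\lambda_k=2+(k-2)R$ for $k\ge3$). Then $\delta_0=\delta_1=\delta_2=1$, $d=2$, and the target is $(\delta_0+\delta_2)/d^2=\tfrac12$. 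The $k=1$ term of $T$ equals $\frac{1}{1\cdot(1-2)}=-1$, while the total contribution of all $k\notin\{0,1,2\}$ is $O(1/R)$; for large $R$ one gets $T$ close to $-1$, well below $\tfrac12$. In the genuinely arithmetic case $\lambda_k=k$ the same $k=1$ term still contributes $-1$, but the nearby terms $k=-1,3,-2,4,\dots$ sum to $+\tfrac32$, restoring $T=\tfrac12$; pushing those points out to distance $R$ destroys exactly that compensating positive mass. So the monotonicity you hoped for---that $T$ is minimized in the arithmetic configuration---goes the wrong way.

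The failure is one of separation, not of decomposition. In the sparse configuration above $T$ is very negative, but simultaneously $A$ and $B$ are each close to $1$, far below the bound $\tfrac{\pi^2}{3}-\tfrac14$ supplied by Lemma~\ref{lemma:Preissmann}; the slack in $A+B$ is precisely what absorbs the deficit in $T$ (indeed one checks the full left side is about $1$, safely under the right side $\tfrac{\pi^2}{6}-\tfrac38\approx1.27$). Any correct argument must keep some coupling between the $1/(\lambda_k-\lambda_\ell)^2$, $1/(\lambda_k-\lambda_m)^2$, and cross terms rather than bounding them independently. Preissmann's original proof does this; to carry your program through you would need either to consult \cite{Pre1984} or to devise an alternative coupled estimate.
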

\begin{proof}
See Preissmann \cite[Lemme~6]{Pre1984}.
\end{proof}

\begin{proof}[Proof~of~Theorem~\emph{\ref{Preissmann'sTheorem}}]
Let
$$
U\coloneqq\sum_{m=1}^N\sum_{\substack{n=1\\n\neq m}}^N\frac{\delta_m^\frac{3}{2}\delta_n^\frac{1}{2}t_mt_n}{\left(\lambda_m-\lambda_n\right)^2}\quad\text{and}\quad V\coloneqq\sum_{n=1}^Nt_n^2.
$$
By Cauchy's inequality,
\begin{align*}
U^2&=\left(\sum_{n=1}^Nt_n\sum_{\substack{m=1\\m\neq n}}^N\frac{\delta_m^\frac{3}{2}\delta_n^\frac{1}{2}t_m}{\left(\lambda_m-\lambda_n\right)^2}\right)^2\\
&\le\left(\sum_{n=1}^Nt_n^2\right)\left(\sum_{n=1}^N\left(\sum_{\substack{m=1\\m\neq n}}^N\frac{\delta_m^\frac{3}{2}\delta_n^\frac{1}{2}t_m}{\left(\lambda_m-\lambda_n\right)^2}\right)^2\right)=V(S+T),
\end{align*}
where
$$
S\coloneqq\sum_{n=1}^N\sum_{\substack{m=1\\m\neq n}}^N\frac{\delta_m^3\delta_nt_m^2}{\left(\lambda_m-\lambda_n\right)^4}\quad\text{and}\quad T\coloneqq\sum_{n=1}^N\sum_{\substack{\ell=1\\\ell\neq n}}^N\sum_{\substack{m=1\\m\neq n\\m\neq\ell}}^N\frac{\delta_\ell^\frac{3}{2}\delta_m^\frac{3}{2}\delta_nt_\ell t_m}{\left(\lambda_\ell-\lambda_n\right)^2\left(\lambda_m-\lambda_n\right)^2}.
$$
Applying Lemma \ref{lemma:Preissmann} with $\sigma=4$, we obtain
$$
S=\sum_{m=1}^N\delta_m^3t_m^2\left(\sum_{\substack{n=1\\n\neq m}}^N\frac{\delta_n}{\left(\lambda_n-\lambda_m\right)^4}\right)\le\sum_{m=1}^N\delta_m^3t_m^2\left(\frac{\pi^4}{45\delta_m^3}\right)=\frac{\pi^4}{45}V.
$$
Applying Lemma \ref{PreissmannLemme6}, we obtain
\begin{align*}
T&=\sum_{\ell=1}^N\sum_{\substack{m=1\\m\neq\ell}}^N\delta_\ell^\frac{3}{2}\delta_m^\frac{3}{2}t_\ell t_m\left(\sum_{\substack{n=1\\n\neq\ell\\n\neq m}}^N\frac{\delta_n}{\left(\lambda_n-\lambda_\ell\right)^2\left(\lambda_n-\lambda_m\right)^2}\right)\\
&\le\sum_{\ell=1}^N\sum_{\substack{m=1\\m\neq\ell}}^N\delta_\ell^\frac{3}{2}\delta_m^\frac{3}{2}t_\ell t_m\left(\frac{\pi^2\left(\delta_\ell+\delta_m\right)}{3\delta_\ell\delta_m\left(\lambda_\ell-\lambda_m\right)^2}\right)=\frac{2\pi^2}{3}U.
\end{align*}
So $U^2\le V\left(\frac{\pi^4}{45}V+\frac{2\pi^2}{3}U\right)$. Solving this gives $U\le\left(\frac{\pi^2}{3}+\frac{\pi^2}{3}\sqrt{\frac{6}{5}}\right)V$.
\end{proof}

\section{Proofs of Theorems \ref{boundedness} and \ref{C(1)exact}}\label{C(alpha)theorems}

%
%

\subsection{Proof of Theorem \ref{boundedness}}

For real numbers $0\le\alpha\le2$ and positive integers $N$, let $\overline{C}(\alpha,N)$ be the minimum of all constants $C(\alpha,N)$ for which the inequality
\begin{equation}\label{general-alpha-N}
\sum_{m=1}^N\sum_{\substack{n=1\\n\neq m}}^N\frac{\delta_m^{2-\alpha}\delta_n^\alpha t_mt_n}{\left(\lambda_m-\lambda_n\right)^2}\le C(\alpha,N)\sum_{n=1}^Nt_n^2
\end{equation}
holds for all choices of a strictly increasing sequence $\left(\lambda_k\right)_{k=-\infty}^\infty$ of real numbers,
$$
\delta_k\coloneqq\min\left\{\lambda_k-\lambda_{k-1},\lambda_{k+1}-\lambda_k\right\},
$$
and nonnegative real numbers $t_1,\dots,t_N$.

\begin{proposition}\label{C(alpha,N)basic}
\emph{(1)} For real numbers $0\le\alpha\le2$, we have $\overline{C}(\alpha,1)=0$ and $\overline{C}(\alpha,2)=1$.\\

\emph{(2)} For real numbers $0\le\alpha\le2$ and positive integers $N$, we have $\overline{C}(\alpha,N)\le\overline{C}(\alpha,N+1)$.\\

\emph{(3)} For real numbers $0\le\alpha\le2$ and positive integers $N$, we have $0\le\overline{C}(\alpha,N)\le N-1$.\\

\emph{(4)} For real numbers $0\le\alpha\le2$, we have $\overline{C}(\alpha)=\lim_{N\rightarrow\infty}\overline{C}(\alpha,N)$.
\end{proposition}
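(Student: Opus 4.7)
The plan is to establish the four statements in sequence from a single key observation. That observation is that whenever $m\neq n$, the distance $|\lambda_m-\lambda_n|$ is bounded below by one of the nearest-neighbor gaps around $\lambda_m$ and one around $\lambda_n$; hence $\delta_m,\delta_n\le|\lambda_m-\lambda_n|$, and because $0\le\alpha\le2$ this yields the termwise bound $\delta_m^{2-\alpha}\delta_n^\alpha\le(\lambda_m-\lambda_n)^2$.

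For part (1), the double sum is empty when $N=1$, so $\overline{C}(\alpha,1)=0$. For $N=2$ the termwise bound gives LHS $\le 2t_1t_2\le t_1^2+t_2^2$, whence $\overline{C}(\alpha,2)\le1$; equality is realized by $\lambda_k=k$ (so $\delta_k=1$) with $t_1=t_2=1$. For part (3), nonnegativity of the infimum is immediate from nonnegativity of both sides of \eqref{general-alpha-N}, and summing the termwise bound yields
\[
\sum_{m=1}^N\sum_{\substack{n=1\\n\neq m}}^N\frac{\delta_m^{2-\alpha}\delta_n^\alpha t_mt_n}{(\lambda_m-\lambda_n)^2}\le\sum_{\substack{m,n=1\\m\neq n}}^Nt_mt_n=\Bigl(\sum_{n=1}^Nt_n\Bigr)^2-\sum_{n=1}^Nt_n^2\le(N-1)\sum_{n=1}^Nt_n^2
\]
by Cauchy--Schwarz, so $\overline{C}(\alpha,N)\le N-1$.

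For part (2), any configuration admissible for size $N$ lifts to one for size $N+1$ by keeping the same doubly infinite strictly increasing sequence $(\lambda_k)$ (which is independent of $N$, so all the $\delta_k$ are preserved) and setting $t_{N+1}=0$; both sides of \eqref{general-alpha-N} are then unchanged, so the supremum of LHS/RHS can only grow with $N$. This monotonicity in turn gives part (4): $L\coloneqq\lim_{N\to\infty}\overline{C}(\alpha,N)$ exists in $[0,\infty]$, and since \eqref{general-alpha} is precisely the collection of \eqref{general-alpha-N} ranging over all $N$, a constant $C$ is admissible in \eqref{general-alpha} if and only if $C\ge\overline{C}(\alpha,N)$ for every $N$, i.e.\ $C\ge L$; hence $\overline{C}(\alpha)=L$. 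The whole proposition is essentially bookkeeping from the definition, and the only point meriting any care is checking that the sequence $(\lambda_k)$ can be reused across different $N$ in part (2); no substantive obstacle arises.
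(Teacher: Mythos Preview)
Your proof is correct and follows essentially the same approach as the paper. The only cosmetic differences are that you make the termwise bound $\delta_m^{2-\alpha}\delta_n^\alpha\le(\lambda_m-\lambda_n)^2$ explicit at the outset (the paper uses it implicitly in parts (1) and (3)), and in part (3) you bound $\sum_{m\neq n}t_mt_n$ via Cauchy--Schwarz whereas the paper uses $t_mt_n\le\tfrac12(t_m^2+t_n^2)$; both yield $(N-1)\sum t_n^2$.
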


\begin{proof}
(1) If $N=1$, the left side of \eqref{general-alpha-N} is $0$. So $\overline{C}(\alpha,1)=0$. If $N=2$, the left side of \eqref{general-alpha-N} is $2t_1t_2$. So $\overline{C}(\alpha,2)=1$.

(2) Let $t_1,\dots,t_N$ be nonnegative real numbers, and let $t_{N+1}=0$. Then
$$
\sum_{m=1}^N\sum_{\substack{n=1\\n\neq m}}^N\frac{\delta_m^{2-\alpha}\delta_n^\alpha t_mt_n}{\left(\lambda_m-\lambda_n\right)^2}=\sum_{m=1}^{N+1}\sum_{\substack{n=1\\n\neq m}}^{N+1}\frac{\delta_m^{2-\alpha}\delta_n^\alpha t_mt_n}{\left(\lambda_m-\lambda_n\right)^2}\le\overline{C}(\alpha,N+1)\sum_{n=1}^{N+1}t_n^2=\overline{C}(\alpha,N+1)\sum_{n=1}^Nt_n^2.
$$
So $\overline{C}(\alpha,N)\le\overline{C}(\alpha,N+1)$.

(3) We have
$$
\sum_{m=1}^N\sum_{\substack{n=1\\n\neq m}}^N\frac{\delta_m^{2-\alpha}\delta_n^\alpha t_mt_n}{\left(\lambda_m-\lambda_n\right)^2}\le\sum_{m=1}^N\sum_{\substack{n=1\\n\neq m}}^Nt_mt_n\le\sum_{m=1}^N\sum_{\substack{n=1\\n\neq m}}^N\frac{t_m^2+t_n^2}{2}=(N-1)\sum_{n=1}^Nt_n^2.
$$
So $\overline{C}(\alpha,N)\le N-1$. On the other hand, from (2) and (1), we have $\overline{C}(\alpha,N)\ge\overline{C}(\alpha,1)=0$.

(4) Since \eqref{general-alpha-N} holds with $C(\alpha,N)=\overline{C}(\alpha)$, it follows that $\overline{C}(\alpha,N)\le\overline{C}(\alpha)$ for all $N$. Hence $\lim_{N\rightarrow\infty}\overline{C}(\alpha,N)\le\overline{C}(\alpha)$. On the other hand, by (2), $\lim_{N\rightarrow\infty}\overline{C}(\alpha,N)=\sup_N\overline{C}(\alpha,N)$. So \eqref{general-alpha} holds with $C(\alpha)=\lim_{N\rightarrow\infty}\overline{C}(\alpha,N)$. Hence $\overline{C}(\alpha)\le\lim_{N\rightarrow\infty}\overline{C}(\alpha,N)$.
\end{proof}

\begin{proposition}\label{proposition:C(alpha,N)}
\emph{(1)} For real numbers $0\le\alpha\le2$ and integers $N\ge2$, we have $\overline{C}(\alpha,N)=\overline{C}(2-\alpha,N)\ge1$.\\

\emph{(2)} For real numbers $0\le\alpha_1<\alpha_2\le2$ and $0<\theta<1$, and for positive integers $N$, we have
$$
\overline{C}\left(\theta\alpha_1+(1-\theta)\alpha_2,N\right)\le\overline{C}\left(\alpha_1,N\right)^\theta\overline{C}\left(\alpha_2,N\right)^{1-\theta}.
$$

\emph{(3)} For real numbers $0\le\alpha_1<\alpha_2\le1$ and positive integers $N$, we have $\overline{C}\left(\alpha_1,N\right)\ge\overline{C}\left(\alpha_2,N\right)$.\\

\emph{(4)} For real numbers $0\le\alpha<\frac{1}{2}$ and integers $N\ge2$, we have $\overline{C}(\alpha,N)\gg N^{\frac{1}{2}-\alpha}$.
\end{proposition}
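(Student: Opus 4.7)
Parts (1)--(3) are formal consequences of the definition of $\overline{C}(\alpha,N)$ together with H\"older's inequality. For (1), swapping the summation indices $m \leftrightarrow n$ in \eqref{general-alpha-N} transforms $\delta_m^{2-\alpha}\delta_n^\alpha$ into $\delta_m^\alpha\delta_n^{2-\alpha}$ while leaving $t_mt_n$ and $(\lambda_m-\lambda_n)^2$ invariant, yielding $\overline{C}(\alpha,N)=\overline{C}(2-\alpha,N)$; the inequality $\overline{C}(\alpha,N)\ge 1$ then follows from the monotonicity $\overline{C}(\alpha,N)\ge\overline{C}(\alpha,2)=1$ supplied by Proposition~\ref{C(alpha,N)basic}. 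For (2), the key identity is
$$
2-(\theta\alpha_1+(1-\theta)\alpha_2)=\theta(2-\alpha_1)+(1-\theta)(2-\alpha_2),
$$
which lets me factor each summand at the parameter $\theta\alpha_1+(1-\theta)\alpha_2$ as the product of the summand at $\alpha_1$ raised to the power $\theta$ and the summand at $\alpha_2$ raised to the power $1-\theta$; H\"older's inequality with exponents $1/\theta$ and $1/(1-\theta)$ then separates the double sum into a product, and the definition of $\overline{C}(\alpha_j,N)$ finishes. For (3), given $0\le\alpha_1<\alpha_2\le 1$, I take $\theta=(2-\alpha_1-\alpha_2)/(2(1-\alpha_1))\in(0,1)$, so that $\alpha_2=\theta\alpha_1+(1-\theta)(2-\alpha_1)$; applying (2) and then (1) gives $\overline{C}(\alpha_2,N)\le\overline{C}(\alpha_1,N)^\theta\overline{C}(2-\alpha_1,N)^{1-\theta}=\overline{C}(\alpha_1,N)$.

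The substantive part is (4), which demands an explicit family of extremal configurations. My plan is to exploit the asymmetry $\delta_m^{2-\alpha}\delta_n^\alpha$ (with $2-\alpha>3/2$) by pairing one widely spaced ``anchor'' point of large weight with a dense cluster of tightly spaced points of smaller weight. Concretely, set $\epsilon=1/N$; place $\lambda_1=0$, $\lambda_k=1+(k-2)\epsilon$ for $2\le k\le N$, and extend $(\lambda_k)$ by unit gaps outside this range, so that $\delta_1=1$ and $\delta_k=\epsilon$ for $2\le k\le N$. Choose weights $t_1=\sqrt{N}$ and $t_k=1$ for $2\le k\le N$, whence $\sum_n t_n^2=2N-1\le 2N$.

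Since all summands in \eqref{general-alpha-N} are nonnegative, the left side is at least the contribution of $m=1$, namely
$$
\sum_{n=2}^N\frac{\delta_1^{2-\alpha}\delta_n^\alpha t_1 t_n}{(\lambda_n-\lambda_1)^2}=\sqrt{N}\,N^{-\alpha}\sum_{k=0}^{N-2}\frac{1}{(1+k/N)^2}\ge\frac{N-1}{4}\cdot\sqrt{N}\,N^{-\alpha},
$$
where the final step uses the elementary bound $(1+k/N)^{-2}\ge 1/4$ for $0\le k\le N-2$. Dividing by $\sum_n t_n^2\le 2N$ gives $\overline{C}(\alpha,N)\gg N^{1/2-\alpha}$ with an absolute implicit constant, valid for every $\alpha\in[0,2]$ (the restriction $\alpha<1/2$ is what makes the conclusion nontrivial, since then $N^{1/2-\alpha}\to\infty$). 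The delicate point in the construction is the choice $t_1=\sqrt{N}$: smaller and the heavy cross term is wasted; larger and it is drowned out by the $t_1^2$ contribution to $\sum_n t_n^2$. The argument extracts only the $m=1$ cross contribution and discards the $m,n\ge 2$ interior sum, which is of size $O(N)$ and hence negligible compared to $N^{3/2-\alpha}$ when $\alpha<1/2$.
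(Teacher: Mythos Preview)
Your proof is correct and follows the paper's approach essentially line for line: parts (1)--(3) match exactly (same symmetry argument, same H\"older application, same choice of $\theta$), and for (4) your anchor-plus-cluster configuration is the paper's construction up to a harmless shift and renormalization of the $t_n$. The paper takes $\lambda_k=k$ for $k\le1$, $\lambda_{2+\ell}=2+\ell/N$, $t_1=\sqrt{(N+1)/(2N)}$, $t_n=1/\sqrt{2N}$ (so that $\sum t_n^2=1$), and likewise retains only the $m=1$ cross terms---the bookkeeping differs but the idea and the resulting bound are identical.
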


\begin{proof}
(1) The left side of \eqref{general-alpha-N} is unchanged on replacing $\alpha$ by $2-\alpha$. It follows that $\overline{C}(\alpha,N)=\overline{C}(2-\alpha,N)$. In addition, by Proposition \ref{C(alpha,N)basic}, we see that $\overline{C}(\alpha,N)\ge\overline{C}(\alpha,2)=1$.

(2) Let $\alpha=\theta\alpha_1+(1-\theta)\alpha_2$. Apply H\"{o}lder's inequality:
\begin{align*}
\sum_{m=1}^N\sum_{\substack{n=1\\n\neq m}}^N\frac{\delta_m^{2-\alpha}\delta_n^\alpha t_mt_n}{\left(\lambda_m-\lambda_n\right)^2}&\le\left(\sum_{m=1}^N\sum_{\substack{n=1\\n\neq m}}^N\frac{\delta_m^{2-\alpha_1}\delta_n^{\alpha_1}t_mt_n}{\left(\lambda_m-\lambda_n\right)^2}\right)^\theta\left(\sum_{m=1}^N\sum_{\substack{n=1\\n\neq m}}^N\frac{\delta_m^{2-\alpha_2}\delta_n^{\alpha_2}t_mt_n}{\left(\lambda_m-\lambda_n\right)^2}\right)^{1-\theta}\\
&\le\overline{C}\left(\alpha_1,N\right)^\theta\overline{C}\left(\alpha_2,N\right)^{1-\theta}\sum_{n=1}^Nt_n^2.
\end{align*}
So $\overline{C}(\alpha,N)\le\overline{C}\left(\alpha_1,N\right)^\theta\overline{C}\left(\alpha_2,N\right)^{1-\theta}$.

(3) Let $\theta=\frac{2-\alpha_1-\alpha_2}{2\left(1-\alpha_1\right)}$. Then $0<\theta<1$ and $\alpha_2=\theta\alpha_1+\left(1-\theta\right)\left(2-\alpha_1\right)$. By (2), we have
$$
\overline{C}\left(\alpha_2,N\right)=\overline{C}\left(\theta\alpha_1+(1-\theta)\left(2-\alpha_1\right),N\right)\le\overline{C}\left(\alpha_1,N\right)^\theta\overline{C}\left(2-\alpha_1,N\right)^{1-\theta}.
$$
The last quantity is equal to $\overline{C}\left(\alpha_1,N\right)$ by (1).

(4) We choose $\lambda_k=k$ for $k\le1$ and $\lambda_{2+\ell}=2+\frac{\ell}{N}$ for $\ell\ge0$. Then $\delta_k=1$ for $k\le1$ and $\delta_{2+\ell}=\frac{1}{N}$ for $\ell\ge0$. Choose $t_1=\sqrt{\frac{N+1}{2N}}$ and $t_n=\frac{1}{\sqrt{2N}}$ for $2\le n\le N$. So $\sum_{n=1}^Nt_n^2=1$, and \eqref{general-alpha-N} yields
$$
C(\alpha,N)\ge\sum_{m=1}^N\sum_{\substack{n=1\\n\neq m}}^N\frac{\delta_m^{2-\alpha}\delta_n^\alpha t_mt_n}{\left(\lambda_m-\lambda_n\right)^2}\ge\sum_{n=2}^N\frac{\delta_1^{2-\alpha}\delta_n^\alpha t_1t_n}{\left(\lambda_1-\lambda_n\right)^2}=\sum_{n=2}^N\frac{\sqrt{N+1}}{2N^{\alpha+1}\left(1+\frac{n-2}{N}\right)^2}.
$$
The last quantity is $\gg N^{\frac{1}{2}-\alpha}$ for $N\ge2$. Hence $\overline{C}(\alpha,N)\gg N^{\frac{1}{2}-\alpha}$ for $N\ge2$.
\end{proof}

\begin{proof}[Proof~of~Theorem~\emph{\ref{boundedness}}]
The result follows as we let $N\rightarrow\infty$ in Proposition \ref{proposition:C(alpha,N)}.
\end{proof}

%
%

\subsection{Proof of Theorem \ref{C(1)exact}}

\begin{proposition}\label{pi^2/3}
Let $\left(\lambda_k\right)_{k=-\infty}^\infty$ be a strictly increasing sequence of real numbers. Denote by $\delta_k$ the minimum between $\lambda_k-\lambda_{k-1}$ and $\lambda_{k+1}-\lambda_k$. Then for any sequence $\left(t_1,\dots,t_N\right)$ of nonnegative real numbers,
\begin{equation}\label{positive-symmetric-weaker}
\sum_{m=1}^N\sum_{\substack{n=1\\n\neq m}}^N\frac{\delta_m\delta_nt_mt_n}{\left(\lambda_m-\lambda_n\right)^2}\le\frac{\pi^2}{3}\sum_{n=1}^Nt_n^2.
\end{equation}
\end{proposition}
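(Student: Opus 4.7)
The plan is to bound the double sum by pairing up each product $t_m t_n$ with $(t_m^2 + t_n^2)/2$ and then appealing directly to Lemma \ref{lemma:Preissmann} with $\sigma = 2$.

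First, I would apply the AM--GM inequality $t_m t_n \le (t_m^2 + t_n^2)/2$ term by term. Since the kernel $\delta_m \delta_n / (\lambda_m - \lambda_n)^2$ is symmetric in $m$ and $n$, the resulting sum of $t_m^2$ and $t_n^2$ terms combine: after relabeling indices in one half, we obtain
$$
\sum_{m=1}^N\sum_{\substack{n=1\\n\neq m}}^N\frac{\delta_m\delta_n t_m t_n}{(\lambda_m - \lambda_n)^2} \le \sum_{m=1}^N\sum_{\substack{n=1\\n\neq m}}^N\frac{\delta_m\delta_n t_n^2}{(\lambda_m - \lambda_n)^2} = \sum_{n=1}^N \delta_n t_n^2 \sum_{\substack{m=1\\m\neq n}}^N \frac{\delta_m}{(\lambda_m - \lambda_n)^2}.
$$

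Next, I would invoke Lemma \ref{lemma:Preissmann} with $\sigma = 2$, which gives
$$
\sum_{\substack{k=-\infty\\k\neq n}}^\infty \frac{\delta_k}{(\lambda_k - \lambda_n)^2} \le \frac{2\zeta(2)}{\delta_n} = \frac{\pi^2}{3\delta_n}.
$$
Since the finite inner sum over $m \in \{1,\dots,N\}\setminus\{n\}$ consists of nonnegative terms, it is dominated by the full bi-infinite sum, so the same upper bound applies.

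Substituting this into the previous display yields
$$
\sum_{m=1}^N\sum_{\substack{n=1\\n\neq m}}^N\frac{\delta_m\delta_n t_m t_n}{(\lambda_m - \lambda_n)^2} \le \sum_{n=1}^N \delta_n t_n^2 \cdot \frac{\pi^2}{3\delta_n} = \frac{\pi^2}{3} \sum_{n=1}^N t_n^2,
$$
which is the desired inequality. There is no real obstacle here; the entire technical difficulty has been absorbed into Lemma \ref{lemma:Preissmann}, and the only ingredients beyond that are the symmetry of the kernel and a single application of AM--GM. This, combined with the lower bound already encoded in Theorem \ref{boundedness} via the choice $\lambda_k = k$, will give the exact value $\overline{C}(1) = \pi^2/3$ claimed in Theorem \ref{C(1)exact}.
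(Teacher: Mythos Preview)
Your argument is correct and is essentially identical to the paper's own proof: apply AM--GM termwise, use the symmetry of the kernel to reduce to a single weighted sum, and invoke Lemma~\ref{lemma:Preissmann} with $\sigma=2$ to bound the inner sum by $\pi^2/(3\delta_n)$. The only (harmless) inaccuracy is in your closing remark: the matching lower bound $\overline{C}(1)\ge\pi^2/3$ is not recorded in Theorem~\ref{boundedness} but is obtained separately in the proof of Theorem~\ref{C(1)exact} by the choice $\lambda_k=k$, $t_n=1/\sqrt{N}$.
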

\begin{proof}
By the arithmetic--geometric mean inequality,
$$
\sum_{m=1}^N\sum_{\substack{n=1\\n\neq m}}^N\frac{\delta_m\delta_nt_mt_n}{\left(\lambda_m-\lambda_n\right)^2}\le\sum_{m=1}^N\sum_{\substack{n=1\\n\neq m}}^N\frac{\delta_m\delta_n\left(t_m^2+t_n^2\right)}{2\left(\lambda_m-\lambda_n\right)^2}=\sum_{n=1}^N\delta_nt_n^2\left(\sum_{\substack{m=1\\m\neq n}}^N\frac{\delta_m}{\left(\lambda_m-\lambda_n\right)^2}\right).
$$
By Lemma \ref{lemma:Preissmann}, the right side above is $\le\sum_{n=1}^N\delta_nt_n^2\left(\frac{\pi^2}{3\delta_n}\right)=\frac{\pi^2}{3}\sum_{n=1}^Nt_n^2$.
\end{proof}

\begin{proof}[Proof~of~Theorem~\emph{\ref{C(1)exact}}]
Proposition \ref{pi^2/3} shows $\overline{C}(1)\le\frac{\pi^2}{3}$. Now taking $\lambda_n=n$ and $t_n=\frac{1}{\sqrt{N}}$ in \eqref{general-alpha-N} yields
$$
\overline{C}(\alpha,N)\ge\frac{2}{N}\sum_{n=1}^{N-1}\frac{N-n}{n^2}=2\sum_{n=1}^{N-1}\frac{1}{n^2}-\frac{2}{N}\sum_{n=1}^{N-1}\frac{1}{n}.
$$
Letting $N\rightarrow\infty$ gives $\overline{C}(\alpha)\ge\frac{\pi^2}{3}$ for all $0\le\alpha\le2$. Hence $\overline{C}(1)=\frac{\pi^2}{3}$.
\end{proof}

%
%

\section{Proof of Theorem \ref{main-negative}}\label{negative-section}

Let $M$ denote a positive integer, and let $x_1,\dots,x_M$ denote real numbers, distinct modulo $1$. Put
$$
d_m\coloneqq\min_{n\neq m}\left\lVert x_n-x_m\right\rVert,
$$
where $\lVert x\rVert=\min_{k\in\mathbb{Z}}\lvert x-k\rvert$ denotes the distance between $x$ and a nearest integer. In the case that $M=1$, we let $d_1\coloneqq1$. Let $\tau_1,\dots,\tau_M$ denote nonnegative real numbers.

\begin{lemma}\label{lemma:trigonometric-equivalence}
The inequality \eqref{positive-symmetric} holds (for all $N$, $\lambda_n$, $\delta_n$, and $t_n$) if and only if the inequality
\begin{equation}\label{trigonometric}
\frac{1}{3}\sum_{m=1}^Md_m^2\tau_m^2+\sum_{m=1}^M\sum_{\substack{n=1\\n\neq m}}^M\frac{d_m^\frac{3}{2}d_n^\frac{1}{2}\tau_m\tau_n}{\sin^2\left(\pi\left(x_m-x_n\right)\right)}\le\frac{C_3}{\pi^2}\sum_{m=1}^M\tau_m^2
\end{equation}
holds for all positive integer $M$, distinct real numbers $x_1,\dots,x_M$ modulo $1$,
\begin{equation}\label{definition:d_m}
d_m\coloneqq\min\left\{\left\vert x_n-x_m-k\right\vert:k\in\mathbb{Z}\right\}\backslash\{0\},
\end{equation}
and nonnegative real numbers $\tau_1,\dots,\tau_M$.
\end{lemma}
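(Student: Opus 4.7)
The plan is to exploit the classical partial-fraction identities
\[
\sum_{j\in\mathbb{Z}}\frac{1}{(\alpha+j)^2}=\frac{\pi^2}{\sin^2(\pi\alpha)}\quad(\alpha\notin\mathbb{Z}),\qquad\sum_{j\neq0}\frac{1}{j^2}=\frac{\pi^2}{3},
\]
which account for both the shape of \eqref{trigonometric} and the appearance of the extra $\frac{1}{3}d_m^2\tau_m^2$ term. Each direction of the equivalence is obtained by inserting a well-chosen configuration into one inequality and passing to a limit.

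For the implication \eqref{positive-symmetric}$\Rightarrow$\eqref{trigonometric}, I would sort $x_1<\cdots<x_M$ in $[0,1)$ and form the bi-infinite periodic extension $\lambda_{m+jM}\coloneqq x_m+j$, $t_{m+jM}\coloneqq\tau_m$ for $m\in\{1,\dots,M\}$, $j\in\mathbb{Z}$. A short check confirms that $\delta_{m+jM}=d_m$, the wrap-around being accommodated by the definition \eqref{definition:d_m}. Applying \eqref{positive-symmetric} to the window $|j|\le K$ (so $N=(2K+1)M$), its left-hand side becomes
\[
\sum_{m,n=1}^{M}d_m^{3/2}d_n^{1/2}\tau_m\tau_n\sum_{\substack{j,k=-K\\(m,j)\neq(n,k)}}^{K}\frac{1}{(x_m-x_n+j-k)^2}.
\]
Re-indexing by $r=j-k$, each $r$ with multiplicity $2K+1-|r|$, the two displayed identities give the inner sum as $(2K+1)\pi^2/\sin^2(\pi(x_m-x_n))+O(\log K)$ when $m\neq n$, and $(2K+1)\pi^2/3+O(\log K)$ when $m=n$. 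Hence the entire left-hand side equals $(2K+1)\pi^2 S+O(\log K)$, where $S$ denotes the left-hand side of \eqref{trigonometric}, while the right-hand side is $C_3(2K+1)\sum_m\tau_m^2$. Dividing by $2K+1$ and letting $K\to\infty$ yields $\pi^2 S\le C_3\sum_m\tau_m^2$, i.e.\ \eqref{trigonometric}.

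For the converse \eqref{trigonometric}$\Rightarrow$\eqref{positive-symmetric}, observe that only $\lambda_0,\dots,\lambda_{N+1}$ actually enter the left-hand side and the constraints of \eqref{positive-symmetric}. Choose $L>2(\lambda_{N+1}-\lambda_0)$ and apply \eqref{trigonometric} with $M=N+2$ points $x_k\coloneqq\lambda_k/L$ for $k=0,\dots,N+1$, taking $\tau_0\coloneqq\tau_{N+1}\coloneqq0$ and $\tau_k\coloneqq t_k$ otherwise. The zero-weight boundary points are inserted precisely so that, once the wrap-around gap $1-(\lambda_{N+1}-\lambda_0)/L$ exceeds every direct gap $(\lambda_{k+1}-\lambda_k)/L$, one has $d_k=\delta_k/L$ for every $k\in\{1,\dots,N\}$. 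Substituting, the diagonal contribution $\frac{1}{3}\sum_k d_k^2\tau_k^2=O(L^{-2})$ vanishes, while $\sin^{-2}(\pi(x_m-x_n))=L^2/(\pi^2(\lambda_m-\lambda_n)^2)\cdot(1+O(L^{-2}))$ uniformly in $m\neq n$ in $\{1,\dots,N\}$. Sending $L\to\infty$ reduces \eqref{trigonometric} to \eqref{positive-symmetric}.

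The main technical fuss will be the $O(\log K)$ error in the forward direction: it arises both from the linear-in-$|r|$ correction in the pair count and from the tail $\sum_{|r|>2K}(\alpha+r)^{-2}$, and must be bounded uniformly in the finitely many parameters $\alpha=x_m-x_n$ so that, after dividing by $2K+1$, it genuinely vanishes. The backward direction is routine once one notices that the auxiliary zero-weight endpoints $x_0$ and $x_{N+1}$ are exactly what is needed to imprint the values of $\delta_1$ and $\delta_N$ onto the torus configuration.
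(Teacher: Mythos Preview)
Your proposal is correct and follows essentially the same strategy as the paper: the forward implication periodically extends the $x_m$'s and applies Ces\`aro-type averaging (the paper uses $K$ copies and phrases the limit as $(C,1)$ summability, you use $2K+1$ copies and an explicit $O(\log K)$ error, but these are the same argument), and the backward implication scales the $\lambda_n$'s into a short arc of the circle and lets the scale tend to zero. The only cosmetic difference is that in the backward direction the paper takes $M=N$ and uses the monotonicity of the left side of \eqref{trigonometric} in the $d_n$'s (together with $d_n\ge\varepsilon\delta_n$) instead of inserting zero-weight endpoints $x_0,x_{N+1}$ to force $d_k=\delta_k/L$ exactly; either device works.
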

\begin{proof}
($\Rightarrow$) Suppose that \eqref{positive-symmetric} holds. Let $x_1,\dots,x_M$ be real numbers, distinct modulo $1$. By symmetry in $x_1,\dots,x_M$, we may assume without loss of generality that $x_1<\dots<x_M<x_1+1$. Let $d_m$ be given by \eqref{definition:d_m}. Let $\tau_1,\dots,\tau_M$ be nonnegative real numbers. Let $K$ be a positive integer. We apply \eqref{positive-symmetric} with $N=KM$. For integers $k$ and $m$ with $1\le m\le M$, put $\lambda_{kM+m}=k+x_m$. Then $\delta_{kM+m}=d_m$. If $0\le k<K$, put $t_{kM+m}=\tau_m$. On inserting into \eqref{positive-symmetric}, we obtain
\begin{equation}\label{K-inequality}
2\sum_{m=1}^M\sum_{k=1}^{K-1}\frac{(K-k)d_m^2\tau_m^2}{k^2}+\sum_{m=1}^M\sum_{\substack{n=1\\n\neq m}}^M\sum_{\substack{k\in\mathbb{Z}\\\vert k\vert<K}}\frac{(K-\vert k\vert)d_m^\frac{3}{2}d_n^\frac{1}{2}\tau_m\tau_n}{\left(x_m-x_n-k\right)^2}\le C_3K\sum_{m=1}^M\tau_m^2.
\end{equation}
Now, since the series
$$
\sum_{k=1}^\infty\frac{1}{k^2}=\frac{\pi^2}{6}\quad\text{and}\quad\sum_{k\in\mathbb{Z}}\frac{1}{(x-k)^2}=\frac{\pi^2}{\sin^2(\pi x)}
$$
converges, it follows that they are (C, 1) summable to the same values, which is to say that
$$
\lim_{K\rightarrow\infty}\frac{1}{K}\sum_{k=1}^{K-1}\frac{K-k}{k^2}=\frac{\pi^2}{6}\quad\text{and}\quad\lim_{K\rightarrow\infty}\frac{1}{K}\sum_{\substack{k\in\mathbb{Z}\\\vert k\vert<K}}\frac{K-\vert k\vert}{(x-k)^2}=\frac{\pi^2}{\sin^2(\pi x)}.
$$
Hence, dividing \eqref{K-inequality} by $\pi^2K$ and letting $K\rightarrow\infty$ gives \eqref{trigonometric}.

($\Leftarrow$) Suppose that \eqref{trigonometric} holds. Let $\left(\lambda_k\right)_{k=-\infty}^\infty$ be a strictly increasing sequence of real numbers, and let $\delta_k\coloneqq\min\left\{\lambda_k-\lambda_{k-1},\lambda_{k+1}-\lambda_k\right\}$. Let $t_1,\dots,t_N$ be nonnegative real numbers. Let $0<\varepsilon<\frac{1}{2\left(\lambda_N-\lambda_0\right)}$. We apply \eqref{trigonometric} with $M=N$. For positive integers $n\le N$, put $x_n=\varepsilon\lambda_n$ and $\tau_n=t_n$. Then $d_n\ge\varepsilon\delta_n$, and \eqref{trigonometric} implies
$$
\frac{\varepsilon^2}{3}\sum_{n=1}^N\delta_n^2t_n^2+\sum_{m=1}^N\sum_{\substack{n=1\\n\neq m}}^N\frac{\varepsilon^2\delta_m^\frac{3}{2}\delta_n^\frac{1}{2}t_mt_n}{\sin^2\left(\pi\varepsilon\left(\lambda_m-\lambda_n\right)\right)}\le\frac{C_3}{\pi^2}\sum_{n=1}^Nt_n^2.
$$
On multiplying by $\pi^2$ and letting $\varepsilon\rightarrow0^+$, we obtain \eqref{positive-symmetric}.
\end{proof}

\begin{lemma}\label{lemma:L-sum-estimate}
For positive real numbers $B<1$ and positive integers $L$, we have
\begin{equation}\label{estimate1}
\sum_{\ell=1}^L\frac{L+1-\ell}{\sin^2\left(\frac{\pi\ell B}{L}\right)}=\frac{L^3}{6B^2}-\frac{L^2\log L}{\pi^2B^2}+O_B\left(L^2\right).
\end{equation}
\end{lemma}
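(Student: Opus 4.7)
The plan is to isolate the singularity of $1/\sin^2(x)$ at the origin, which dominates the sum when $\ell$ is small, and bound the residual contribution by $O_B(L^2)$.

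First I would write $\csc^2(x) = x^{-2} + g(x)$, where $g(x) \coloneqq \csc^2(x) - x^{-2}$. The Taylor expansion $\sin x = x - x^3/6 + O(x^5)$ yields $g(x) = 1/3 + O(x^2)$ as $x \to 0^+$, so $g$ extends continuously to $[0,\pi)$. Because $B < 1$, the arguments $\pi\ell B/L$ all lie in $(0,\pi B] \subset (0,\pi)$, hence $|g(\pi\ell B/L)| \le M_B$ for some constant $M_B$ depending only on $B$. Substituting into the sum gives
$$\sum_{\ell=1}^L \frac{L+1-\ell}{\sin^2(\pi\ell B/L)} = \frac{L^2}{\pi^2 B^2}\sum_{\ell=1}^L \frac{L+1-\ell}{\ell^2} + R,$$
where $|R| \le M_B \sum_{\ell=1}^L (L+1-\ell) = M_B\binom{L+1}{2} \ll_B L^2$.

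Next I would evaluate the main sum by splitting $(L+1-\ell) = (L+1) - \ell$ and invoking the standard asymptotics $\sum_{\ell=1}^L \ell^{-2} = \pi^2/6 + O(L^{-1})$ and $\sum_{\ell=1}^L \ell^{-1} = \log L + O(1)$, which give
$$\sum_{\ell=1}^L \frac{L+1-\ell}{\ell^2} = (L+1)\!\left(\frac{\pi^2}{6} + O(L^{-1})\right) - \log L + O(1) = \frac{\pi^2 L}{6} - \log L + O(1).$$
Multiplying this by $L^2/(\pi^2 B^2)$ produces the claimed main terms $\frac{L^3}{6B^2} - \frac{L^2\log L}{\pi^2 B^2}$ with an error of size $O(L^2/B^2)$, which absorbs the earlier $R$.

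There is no serious obstacle: the computation is essentially routine once the singular piece is separated. The one subtlety is that the uniform bound $M_B$ degenerates as $B \to 1^-$ (since $\sin(\pi B) \to 0$), so the error is genuinely $B$-dependent rather than uniform in $B$; this dependence is exactly what is recorded by the $O_B$ notation in the statement. Obtaining uniformity in $B$ near $1$ would require handling the terms with $\ell$ close to $L$ separately via the reflection $\sin(\pi\ell B/L) = \sin(\pi(1-\ell B/L))$, but no such uniformity is needed here.
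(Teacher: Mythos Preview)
Your proof is correct and follows essentially the same route as the paper: isolate the singular part $\csc^2(\pi x)=\tfrac{1}{\pi^2 x^2}+O_B(1)$ on $(0,B]$, bound the remainder by $O_B(L^2)$, and then apply the standard asymptotics for $\sum \ell^{-2}$ and $\sum \ell^{-1}$. The only cosmetic difference is that the paper justifies the $O_B(1)$ bound via the partial-fraction identity $\pi^2\csc^2(\pi x)=\sum_{k\in\mathbb{Z}}(x-k)^{-2}$, whereas you use the Taylor expansion at $0$ together with continuity on the compact interval $[0,\pi B]$.
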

\begin{proof}
From the identity $\frac{\pi^2}{\sin^2(\pi x)}=\sum_{k\in\mathbb{Z}}\frac{1}{(x-k)^2}$, we see that if $0<x\le B$, then
$$
\frac{\pi^2}{\sin^2(\pi x)}-\frac{1}{x^2}=\sum_{n=1}^\infty\left(\frac{1}{(n+x)^2}+\frac{1}{(n-x)^2}\right)<\sum_{n=1}^\infty\left(\frac{1}{n^2}+\frac{1}{(n-B)^2}\right).
$$
Hence, for $0<x\le B$, we have $\frac{1}{\sin^2(\pi x)}=\frac{1}{\pi^2x^2}+O_B(1)$.
Applying this estimate to each term on the left side of \eqref{estimate1}, we obtain
\begin{align*}
\sum_{\ell=1}^L\frac{L+1-\ell}{\sin^2\left(\frac{\pi\ell B}{L}\right)}&=\sum_{\ell=1}^L\frac{L^2(L+1-\ell)}{\pi^2\ell^2B^2}+O_B\left(\sum_{\ell=1}^L(L+1-\ell)\right)\\
&=\frac{L^2(L+1)}{\pi^2B^2}\sum_{\ell=1}^L\frac{1}{\ell^2}-\frac{L^2}{\pi^2B^2}\sum_{\ell=1}^L\frac{1}{\ell}+O_B\left(L^2\right).
\end{align*}
Since $\sum_{\ell=1}^L\frac{1}{\ell^2}=\frac{\pi^2}{6}+O\left(\frac{1}{L}\right)$ and $\sum_{\ell=1}^L\frac{1}{\ell}=\log L+O(1)$, the result \eqref{estimate1} follows.
\end{proof}
\begin{proof}[Proof~of~Theorem~\emph{\ref{main-negative}}]
To prove a lower bound for $\overline{C}\left(\frac{1}{2}\right)$, we apply \eqref{trigonometric} with particular sets of values. Let $K$ be a positive integer. Let $A$ and $B$ be positive real numbers such that $(K+1)A+B=1$. Let $L\ge\frac{B}{A}$ be an integer. We apply \eqref{trigonometric} with $M=K+L+1$. Choose $x_k=kA$ for $1\le k\le K$ and $x_{K+\ell+1}=(K+1)A+\frac{\ell B}{L}$ for $0\le\ell\le L$. Then $d_k=A$ for $1\le k\le K$ and $d_{K+\ell+1}=\frac{B}{L}$ for $0\le\ell\le L$. Choose $\tau_k=\frac{1}{\sqrt{K}}$ for $1\le k\le K$ and $\tau_{K+\ell+1}=\frac{u}{\sqrt{L+1}}$ for $0\le\ell\le L$ where $u$ is a nonnegative real number to be chosen later. Then \eqref{trigonometric} implies
\begin{align}\label{ABKLu-inequality}
&\frac{A^2}{3}+\frac{u^2B^2}{3L^2}+\frac{2A^2}{K}\sum_{k=1}^{K-1}\frac{K-k}{\sin^2(\pi kA)}+\frac{2u^2B^2}{L^2(L+1)}\sum_{\ell=1}^L\frac{L+1-\ell}{\sin^2\left(\frac{\pi\ell B}{L}\right)}\\
&+u\sqrt{\frac{AB}{KL(L+1)}}\left(A+\frac{B}{L}\right)\sum_{k=1}^K\sum_{\ell=0}^L\frac{1}{\sin^2\left(\pi\left(kA+\frac{\ell B}{L}\right)\right)}\le\frac{C_3}{\pi^2}\left(1+u^2\right).\nonumber
\end{align}
We observe that
\begin{align*}
\lim_{L\rightarrow\infty}\frac{1}{L}\sum_{k=1}^K\sum_{\ell=0}^L\frac{1}{\sin^2\left(\pi\left(kA+\frac{\ell B}{L}\right)\right)}&=\sum_{k=1}^K\int_0^1\frac{dx}{\sin^2(\pi(kA+Bx))}\\
&=\frac{1}{\pi B}\sum_{k=1}^K(\cot(\pi(K+1-k)A)+\cot(\pi kA))\\
&=\frac{2}{\pi B}\sum_{k=1}^K\cot(\pi kA).
\end{align*}
Now we let $L\rightarrow\infty$ in \eqref{ABKLu-inequality} and use the above estimate and Lemma \ref{lemma:L-sum-estimate}, obtaining
$$
\frac{A^2}{3}+\frac{2A^2}{K}\sum_{k=1}^{K-1}\frac{K-k}{\sin^2(\pi kA)}+\frac{u^2}{3}+\frac{2u}{\pi}\sqrt{\frac{A^3}{BK}}\sum_{k=1}^K\cot(\pi kA)\le\frac{C_3}{\pi^2}\left(1+u^2\right).
$$
That is,
\begin{equation}\label{equation:g(u)}
g(u)\coloneqq\frac{\kappa_0+\kappa_1u+\frac{u^2}{3}}{1+u^2}\le\frac{C_3}{\pi^2},
\end{equation}
where $\kappa_0$ and $\kappa_1$ depend on $A$, $B$, and $K$ and are given by
$$
\kappa_0\coloneqq\frac{A^2}{3}+\frac{2A^2}{K}\sum_{k=1}^{K-1}\frac{K-k}{\sin^2(\pi kA)}\quad\text{and}\quad\kappa_1\coloneqq\frac{2}{\pi}\sqrt{\frac{A^3}{BK}}\sum_{k=1}^K\cot(\pi kA).
$$
We find that $g(u)$ is maximized on $u\ge0$ at $u=u_0\coloneqq\frac{1}{\kappa_1}\left(\frac{1}{3}-\kappa_0+\sqrt{\left(\frac{1}{3}-\kappa_0\right)^2+\kappa_1^2}\right)$.
On inserting $u=u_0$ in \eqref{equation:g(u)}, we get
$$
G_K(A)\coloneqq\frac{1}{2}\left(\frac{1}{3}+\kappa_0+\sqrt{\left(\frac{1}{3}-\kappa_0\right)^2+\kappa_1^2}\right)\le\frac{C_3}{\pi^2}.
$$

\begin{figure}[h]
\includegraphics[scale=0.4]{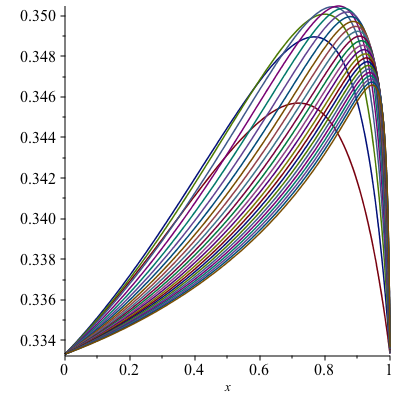}
\caption{The plot of $G_K\left(\frac{x}{K+1}\right)$ for $K=1,\dots,25$ and $0<x<1$.}
\label{G_Kgraph}
\end{figure}

Figure \ref{G_Kgraph} shows the plot of $G_K\left(\frac{x}{K+1}\right)$ for $K=1,\dots,25$ and $0<x<1$. We find
$$G_5(0.14)>0.35047.$$
By Lemma \ref{lemma:trigonometric-equivalence}, this gives the lower bound $\frac{C_3}{\pi^2}\ge0.35047$ for any absolute constant $C_3$ such that \eqref{positive-symmetric} holds. Since \eqref{positive-symmetric} holds with $C_3=\overline{C}\left(\frac{1}{2}\right)$, the result follows.
\end{proof}

\section*{Acknowledgements}

The author wishes to thank Professor Hugh Montgomery for suggesting the topic. The author would like to thank Professor Jeffrey Lagarias for helpful comments on the writing of this paper. This work was partially supported by NSF-grants DMS-1701576 and DMS-1701577.

\end{document}